%% file: main.tex
\documentclass[10pt]{amsart}

\usepackage[T1]{fontenc}
\usepackage{lmodern}
\usepackage{microtype}

\usepackage{amsmath}
\usepackage{amssymb}
\usepackage{amsthm}
\usepackage{amsxtra}
\usepackage{mathtools}
\usepackage{mathrsfs}
\numberwithin{equation}{section}

\usepackage{array}
\usepackage{booktabs}
\usepackage{longtable}
\usepackage{enumitem}
\setlist{itemsep=4pt}

\usepackage{xcolor}
\usepackage[margin=1.15in]{geometry}

\newcommand{\Disc}{\operatorname{disc}}
\newcommand{\GQ}{\Gal_{\Q}}

\definecolor{webcolor}{rgb}{0.8,0,0.2}
\definecolor{webbrown}{rgb}{.6,0,0}

\usepackage{hyperref}
\hypersetup{
  colorlinks=true,
  urlcolor=blue,
  citecolor=blue,
  linkcolor=blue,
  pdftitle={The groups SL2(F13) and SL2(F19) are Galois over Q},
  pdfauthor={Emir Eray Karabiyik},
  pdfsubject={Inverse Galois realizations of special linear groups},
  pdfkeywords={inverse Galois problem, special linear group, central embedding problem, Hurwitz space, Shimura curve}
}
\usepackage[alphabetic,backrefs,lite]{amsrefs}

\newtheorem{theorem}{Theorem}[section]
\newtheorem{proposition}[theorem]{Proposition}
\newtheorem{lemma}[theorem]{Lemma}
\newtheorem{corollary}[theorem]{Corollary}

\theoremstyle{definition}
\newtheorem{example}[theorem]{Example}

\theoremstyle{remark}
\newtheorem{remark}[theorem]{Remark}

\newcommand{\PP}{\mathbb P}
\newcommand{\QQ}{\mathbb Q}

\newcommand{\F}{\mathbb{F}}

\newcommand{\Q}{\mathbb{Q}}

\newcommand{\Z}{\mathbb{Z}}

\newcommand{\SL}{\operatorname{SL}}
\newcommand{\PGL}{\operatorname{PGL}}
\newcommand{\PSL}{\operatorname{PSL}}

\newcommand{\Gal}{\operatorname{Gal}}

\def\bbar#1{\setbox0=\hbox{$#1$}\dimen0=.2\ht0
  \kern\dimen0 \overline{\kern-\dimen0 #1}}

\title{The groups
  $\SL_2(\F_{13})$ and $\SL_2(\F_{19})$ are Galois over $\Q$}
\author{Emir Eray Karabiyik}
\address{Department of Mathematics, Cornell University,
  Ithaca, NY 14853, USA}
\email{ek693@cornell.edu}
\date{September 2, 2026}

\subjclass[2020]{Primary 11R32; Secondary 20D06}

\usepackage[euler-digits,small]{eulervm}

\begin{document}

\begin{abstract}
In this paper, we show that $\SL_2(\F_{13})$ and $\SL_2(\F_{19})$ are Galois groups of
totally real extensions of $\Q$. For each of these primes $\ell$, we find a polynomial of degree $\ell+1$ whose splitting field has Galois group $\PSL_2(\F_\ell)$. These fields satisfy B\"oge's criterion and the associated central embedding problem has a proper solution with Galois group $\SL_2(\F_\ell)$. One can choose the resulting fields totally real via a quadratic twist. The degree $14$ polynomial is found through a genus one Hurwitz family of degree 14 covers. The degree 20 polynomial is found using Yang's modular equations for the Shimura curve $X_6^*(1)$.

\end{abstract}

\maketitle

\section{Introduction}\label{sec:introduction}

\subsection{The inverse Galois problem for \texorpdfstring{$\SL_2(\F_\ell)$}
  {SL2(Fl)}}\label{subsec:intro-statement}
The inverse Galois problem asks whether every finite group occurs as the
Galois group over $\Q$. Zywina~\cite{Zywina2015} proved that the projective groups $\PSL_2(\F_\ell)$ are Galois over $\QQ$ for all primes $\ell\geq 5$. However, for the groups $\SL_2(\F_\ell)$, the answer is known only for $\ell=5,7,11$. One reason the inverse Galois problem is harder for
  $\SL_2(\F_\ell)$ is parity.  Two-dimensional representations attached to
  holomorphic modular eigenforms are odd, whereas a representation with
  image in $\SL_2(\F_\ell)$ has determinant one and is even.
 
A $\PSL_2(\F_\ell)$ extension $L/\Q$
gives a surjection $\varphi\colon\GQ\to\PSL_2(\F_\ell)$, and lifting it through the exact sequence
\begin{equation}\label{eq:central-extension}
 1\longrightarrow\{\pm I\}\longrightarrow\SL_2(\F_\ell)
 \longrightarrow\PSL_2(\F_\ell)\longrightarrow1
\end{equation}
is a \emph{central embedding problem} whose obstruction lives in
$H^2(\GQ,\{\pm1\})$. That obstruction depends on the arithmetic of the
particular field $L$ and not on the abstract group $\PSL_2(\F_\ell)$. Kl\"uners records $\PSL_2(\F_{11})$ extensions of $\Q$
that do not lift~\cite{Klueners2000} to $\SL_2(\F_{11})$. Hence, a realization of $\SL_2(\F_\ell)$
often needs a projective realization chosen for its local behavior.

The group $\SL_2(\F_5)$ was  realized  over
$\Q$ by Sonn~\cite{Sonn1980}, and a regular realization was given by Mestre as
a special case of his construction for $\widetilde A_n$~\cite{Mestre1990}. Plans produced a generic extension~\cite{Plans2007} for the same group. Mestre also
constructed a regular realization of $\SL_2(\F_7)$~\cite{Mestre1994}. In ~\cite{Klueners2000}, Kl\"uners found a degree-$24$ polynomial for $\SL_2(\F_{11})$, by a ray class field
computation carried out under the generalized Riemann hypothesis, and
obtained an unconditional regular realization by Mestre's
method.  We are not aware of a previously recorded
realization for $\ell>11$. 
Uttenthal observes that no explicit even irreducible mod $\ell$ (for $\ell \geq 13$) representation
is available~\cite{Uttenthal2025}*{Remark~20}. Our main result is the following.

\begin{theorem}\label{thm:main}
For $\ell=13$ and for $\ell=19$, there is a totally real Galois extension
$M_\ell/\Q$ with $\Gal(M_\ell/\Q)\cong\SL_2(\F_\ell)$.
\end{theorem}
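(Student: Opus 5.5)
The plan follows the route sketched in the abstract: first produce an explicit $\PSL_2(\F_\ell)$ field, then lift it to $\SL_2(\F_\ell)$ by solving the central embedding problem \eqref{eq:central-extension}, and finally twist to make the result totally real.

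\emph{Step 1: an explicit $\PSL_2(\F_\ell)$ field.} For each $\ell\in\{13,19\}$ we exhibit a polynomial $f_\ell\in\Z[x]$ of degree $\ell+1$ whose splitting field $L_\ell$ has Galois group $\PSL_2(\F_\ell)$, acting on the roots as on $\PP^1(\F_\ell)$.
\begin{itemize}
\item For $\ell=13$, take $f_{13}$ as a specialization of a genus one Hurwitz family of degree $14$ covers of $\PP^1$ with monodromy $\PSL_2(\F_{13})$. Choose the specialization point as a rational point of that family curve, picked so that the ramification in $L_{13}$ is controlled.
\item For $\ell=19$, use Yang's modular equations for the Shimura curve $X_6^*(1)$. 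The fibre of the degree-$20$ Hecke correspondence over a rational CM-free point gives $f_{19}$. The mod-$19$ Galois representation on the quaternionic abelian surface has projective image in $\PSL_2$ rather than $\PGL_2$ after a suitable choice, and this is where the even parity comes from.
\end{itemize}

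To certify the Galois group, use a resolvent computation, or Frobenius cycle types mod several primes together with the classification of subgroups of $\PGL_2(\F_\ell)$. The key facts are: the group is transitive of degree $\ell+1$, it contains an element of order $\ell$, it contains other cycle types excluding Borel and dihedral subgroups, and $\Disc f_\ell$ is a square, which puts the group inside $\PSL_2$.

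\emph{Step 2: solving the embedding problem via B\"oge's criterion.} The obstruction lies in $H^2(\GQ,\{\pm1\})$, which injects into $\bigoplus_v \mathrm{Br}(\Q_v)[2]$. It therefore suffices to show that the local embedding problems are solvable at every place $v$. Note that $\infty$ may be excluded from this count via the product formula, but it is cleaner to check it directly.

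B\"oge's criterion reduces the obstruction to a quadratic-form condition. The trace form $\mathrm{Tr}_{K/\Q}(x^2)$ of the stem field $K=\Q[x]/(f_\ell)$ must be equivalent, over $\Q$, to the form $\langle 1,\dots,1\rangle$, up to an explicit correction depending on the permutation character. This amounts to the $2$-Sylow of point stabilizers lifting appropriately; concretely, one checks that the Hasse--Witt invariant of the trace form equals the predicted value.

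This reduces to a finite computation:
\begin{itemize}
\item compute the signature of $f_\ell$ (number of real roots), which settles $v=\infty$;
\item compute the local Hasse invariants at $2$ and at the odd primes dividing $\Disc K$, which are finitely many and explicit.
\end{itemize}
For odd primes with tame ramification, the local lift exists automatically when the decomposition group has odd order or is cyclic with a liftable generator. At $2$ and at wildly ramified primes, one computes $\mathrm{Tr}$ form invariants directly using Hilbert symbols.

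\emph{Step 3: twisting.} Once one lift $M'$ exists, every other lift is a quadratic twist of it by a character $\chi$ of $\GQ$. A proper solution is automatic since $\SL_2(\F_\ell)$ is a non-split (Frattini) central extension: any lift surjecting onto $\PSL_2$ is surjective. To make the lift totally real, complex conjugation must map to $I$ rather than $-I$. For this we need $L_\ell$ to be totally real, or complex conjugation to be trivial in $\PSL_2$; choosing $f_\ell$ with all roots real secures this. We then pick the twist $\chi$, e.g. by $\Q(\sqrt{d})$ with $d$ of suitable sign, so that the lift of complex conjugation becomes trivial.

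\emph{Main obstacle.} I expect the hardest part to be Step 1 together with the local check at $2$ in Step 2. One must find a specialization that is simultaneously totally real and has local behavior at $2$ and at the ramified primes making the Brauer obstruction vanish. I would search over many rational points of the Hurwitz curve, respectively Shimura-curve points, filtering first by signature and then by the B\"oge invariant.
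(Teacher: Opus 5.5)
Your architecture is the paper's: a totally real $\PSL_2(\F_\ell)$ stem field, an obstruction class in $\mathrm{Br}(\Q)[2]$ tested by B\"oge/Serre, perfectness of $\SL_2(\F_\ell)$ making every lift proper, and an imaginary quadratic twist sending complex conjugation to $I$ (\S\ref{subsec:proof-of-main} and Lemma~\ref{lem:central-twist}). The genuine gap is Step~1 for $\ell=19$: a fibre of Yang's Hecke correspondence over a rational point of $X_6^*(1)$ is never totally real.

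Over a real non-elliptic point, complex conjugation permutes the twenty Hecke points through the reduction mod $19$ of some $\nu\in\mathcal B^\times$ with $\operatorname{nrd}(\nu)<0$ and $\nu^2\in\Q^\times$. That is a non-scalar involution of $\PGL_2(\F_{19})$, so it fixes at most two points of $\PP^1(\F_{19})$. This is the quaternionic analogue of complex conjugation having determinant $-1$ on $E[\ell]$. So the Hecke construction reproduces the parity problem rather than curing it.

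It gets worse on the $\PSL_2$ locus. A generic fibre has group $\PGL_2(\F_{19})$ (Proposition~\ref{prop:l19-yang-pgl}). To land in $\PSL_2(\F_{19})$ you need $-95T_0\in\Q^{\times2}$, since $\Disc_zP=-95T\,\Delta_0^2$. Then complex conjugation is an involution of $\PSL_2(\F_{19})$, which is fixed-point free because $19\equiv3\pmod4$. Such a fibre has no real roots at all. The embedding problem is then unsolvable, because that involution lifts only to elements of order $4$ in $\SL_2(\F_{19})$.

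So filtering Shimura-curve points by signature cannot succeed; the paper records the same failure for its harmonic, sign-pulled-back model. The missing idea is to leave the Hecke locus. The paper passes to the sign pullback $T=-95s^2$, with Nielsen class $(2A,2A,2A,3A)$ and genus-two sources. It then deforms along the rational inner Hurwitz conic to $t_*=-45/13$, a point in a real interval where complex conjugation acts trivially on the Nielsen class. A $29$-adic Hensel lift and Mestre's conic descend the resulting cover $F_*$ to $\Q$, and only then does specialization produce the totally real $f_{19}$.

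Several further steps need repair.
\begin{enumerate}
\item A square discriminant gives $G\le A_{\ell+1}$, not $G\le\PSL_2(\F_\ell)$, and Frobenius cycle types cannot exclude $A_{\ell+1}$. The upper bound must come from a resolvent (the triple-sum resolvent splitting as $182+182$ for $\ell=13$) or from the arithmetic monodromy of the family ($G\le N_{S_{20}}(H)\cap A_{20}=H$ for $\ell=19$).
\item The tame local lift is not automatic for cyclic $D$. For $D=I\cong C_2$ it exists only if $q\equiv1\pmod4$, and for $D\cong C_2\times C_2$ only if $q\equiv3\pmod4$. The correct test is condition~(2) of Theorem~\ref{thm:boege}.
\item The place $2$, which you call the main obstacle, needs no check. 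Once $\infty$ and all odd places are unobstructed, reciprocity forces the dyadic invariant to vanish. This is how the paper avoids the wildly ramified prime $2$ of $K_{13}$.
\item If you use the trace-form formulation, you should state that it computes the $\SL_2(\F_\ell)$ obstruction only because $\ell\equiv3,5\pmod8$.
\end{enumerate}

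Finally, the theorem is an existence statement whose content is the certified data of Theorem~\ref{thm:stems}. A search strategy that does not exhibit a field satisfying both conditions of Theorem~\ref{thm:boege} does not yet prove it.
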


\subsection{Projective stem fields}\label{subsec:intro-stems}
 Let $\ell$ be an odd prime, and let $\PSL_2(\F_\ell)$ act naturally on
  $\Omega_\ell=\PP^1(\F_\ell)$. This action is faithful and transitive and has
  degree $\ell+1$. If $L/\Q$ is Galois with group $\PSL_2(\F_\ell)$ and
  $B\leq\PSL_2(\F_\ell)$ is the stabilizer of a point of $\Omega_\ell$, we call
  the degree-$(\ell+1)$ field $L^B$ a \emph{projective stem field} of $L$.

  Equivalently, let $f_\ell\in\Q[x]$ be irreducible of degree $\ell+1$, and let
  $L_\ell$ be its splitting field. Suppose that an isomorphism
  \[
   \Gal(L_\ell/\Q)\simeq\PSL_2(\F_\ell)
  \]
  identifies the action on the roots of $f_\ell$ with the natural action on
  $\PP^1(\F_\ell)$. If $\alpha_\ell$ is a root of $f_\ell$, then
  \[
   \Q(\alpha_\ell)=L_\ell^B,
  \]
  where $B$ is the stabilizer of $\alpha_\ell$. Thus $\Q(\alpha_\ell)$ is a
  projective stem field. Kl\"uners's degree $12$ field for $\ell=11$ is an
  example~\cite{Klueners2000}. Our arithmetic task is to construct such stem
  fields and verify the local conditions appearing in B\"oge's criterion.

\begin{theorem}\label{thm:stems}
Let $\ell\in\{13,19\}$. For $\ell=13$, let $f_\ell\in\Z[x]$ be the degree $14$ polynomial displayed in Example~\textup{\ref{ex:f13}}, and for $\ell=19$, let $f_\ell\in\Z[x]$ be the degree $20$ polynomial in
Appendix~\textup{\ref{app:f19}}.  Then $f_\ell$ is irreducible,
all $\ell+1$ of its roots are real, and its splitting field $L_\ell$
satisfies
\[
 \Gal(L_\ell/\Q)\cong\PSL_2(\F_\ell).
\]
This isomorphism carries the action on the roots of $f_\ell$ to the action of
$\PSL_2(\F_\ell)$ on $\PP^1(\F_\ell)$.  Moreover, for every odd rational prime $q$
whose ramification index $e(q)$ in $L_\ell$ is even, the residue degree
$f(q)$ satisfies
\[
 f(q)\ \text{is odd}
 \qquad\Longleftrightarrow\qquad
 q\equiv1\pmod4 .
\]
\end{theorem}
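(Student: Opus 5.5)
We prove the three assertions (irreducibility and reality of the roots, the Galois group, and the local parity condition) for each explicit polynomial separately. All three reduce to finite computations once $f_\ell$ is fixed.

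\emph{Irreducibility and real roots.} First I would factor $f_\ell$ modulo a few small primes of good reduction. A prime $p$ where $f_\ell \bmod p$ is irreducible of degree $\ell+1$ gives irreducibility over $\Q$. Alternatively, a pair of factorization patterns whose degree sets admit no common partial sum works as well. For reality of the roots I would use Sturm's theorem, or equivalently an exact count of sign changes of $f_\ell$ on rational points separating the roots, certifying $\ell+1$ real roots. Since the action on roots is faithful, complex conjugation is then trivial, so $L_\ell$ is totally real.

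\emph{Galois group.} Let $G=\Gal(L_\ell/\Q)\le S_{\ell+1}$, which is transitive by irreducibility. I would show $G\le \PSL_2(\F_\ell)$ with its natural action, and then show equality.

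\textbf{Containment.} For $\ell=13$, the polynomial comes from a Hurwitz family whose monodromy group is $\PSL_2(\F_{13})$ in its degree-$14$ action, so the geometric monodromy of the specialized cover lies in that group. Descending to $\Q$ requires that the family be defined over $\Q$ with no extra constant field extension. For $\ell=19$, Yang's modular equation for $X_6^*(1)$ carries the Hecke correspondence of level $19$, whose monodromy is $\PSL_2(\F_{19})$ or $\PGL_2(\F_{19})$ acting on $\PP^1(\F_{19})$. Because the $\PGL_2$ fibres depend on a square-class condition, $G\le\PSL_2$ is equivalent to $\Disc f_\ell$ being a square. I expect to verify this directly, since $\PSL_2$ is exactly the even part of $\PGL_2$ in this action ($\ell\equiv 3\bmod 4$ and $\ell\equiv1\bmod4$ need separate checks). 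As a safer, uniform fallback I would invoke a resolvent computation, for example a Stauduhar-style descent or the standard algorithms in Magma/PARI, to certify $G\le \PSL_2(\F_\ell)$.

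\textbf{Equality.} I would use Frobenius cycle types: reductions mod $p$ give elements of orders such as $\ell$, $(\ell\pm1)/2$, and so on. By Dickson's classification of subgroups of $\PSL_2(\F_\ell)$, a transitive subgroup containing elements of order $\ell$ and of order $(\ell+1)/2$ with $(\ell+1)/2>5$ must be the whole group. This fixes the isomorphism together with the compatibility of the permutation action.

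\emph{Local condition.} I would first compute $\Disc f_\ell$ and the field discriminant of $\Q(\alpha_\ell)$ to find the finitely many ramified primes $q$. For each odd ramified $q$, I would compute the decomposition into primes of $\Q(\alpha_\ell)$ with ramification indices and residue degrees, via $q$-adic factorization (Montes/round 4). Since $G$ is known, the orbit structure of the decomposition and inertia groups on $\PP^1(\F_\ell)$ determines $e(q)$ and $f(q)$ in $L_\ell$. Tame inertia at $q\neq\ell$ is cyclic, so $e(q)$ is read off from the pattern. When $q=\ell$ is wild, I would compute the Galois closure locally, for instance using a database of $\ell$-adic fields or a resolvent over $\Q_\ell$. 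Then, for each $q$ with $e(q)$ even, I check the parity of $f(q)$ against $q\bmod 4$.

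The main obstacle is the rigorous determination of $e(q)$ and $f(q)$ in the full Galois closure from data on the stem field, especially at wildly ramified $q=\ell$ (or at $q$ dividing the level). There I would try computing the local Galois closure of $f_\ell$ over $\Q_q$ directly, since its degree is small by comparison.
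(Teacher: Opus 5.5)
Your plan has the paper's overall shape: factorization patterns with no common partial sums, Sturm sequences, a square discriminant plus an $\ell$-cycle, and stem-field factorizations read through Lemma~\ref{lem:orbit-dictionary}. But the step you actually argue for the upper bound $G\le\PSL_2(\F_\ell)$ does not apply to these polynomials.

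The polynomial $f_{19}$ is not a fiber of Yang's modular equation $P(T,z)$, whose monodromy is $\PGL_2(\F_{19})$. It is a fiber at $Y_0$ of a different cover $F_*$, with branch types $2^{10},2^{10},2^{10},1^2 3^6$ and genus-two source. That cover comes from a harmonic model of Yang's cover (identified with it modulo $23$), followed by the sign pullback $T=-95s^2$, and then by moving along the inner Hurwitz curve from $t=1$ to $t_*=-45/13$ through a $29$-adic lift descended to $\Q$. The paper must first prove that this particular cover has geometric monodromy $\PSL_2(\F_{19})$ (Proposition~\ref{prop:l19-target-cover}, by specialization modulo $29$ using $t_*\equiv1\pmod{29}$). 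Only then do $H\trianglelefteq A\le N_{S_{20}}(H)=\PGL_2(\F_{19})$, $G\le A$ and the square discriminant give $G\le H$.

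Two smaller points on this step. You do not need the absence of a constant field extension. Also, no case split modulo $4$ is needed: $x\mapsto gx$ with $g$ generating $\F_\ell^\times$ is an $(\ell-1)$-cycle, hence odd, so $\PGL_2(\F_\ell)\cap A_{\ell+1}=\PSL_2(\F_\ell)$ for every odd $\ell$.

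For $\ell=13$, the monodromy of the degree-$14$ family is never established, and the paper deliberately keeps the construction out of the proof. So your containment rests entirely on your fallback. That fallback is legitimate: the paper also runs Magma's \texttt{GaloisProof}, while PARI's \texttt{polgalois} does not reach degrees $14$ or $20$. For $\ell=13$ the paper makes the fallback concrete. A square discriminant and a $13$-cycle leave only $14T30\cong\PSL_2(\F_{13})$ and $A_{14}$ among transitive groups of degree $14$. The triple-sum resolvent of degree $364$, computed exactly, is squarefree and factors as $182+182$; this excludes $A_{14}$, which is transitive on $3$-subsets. For your lower bound, an element of order $\ell$ plus transitivity already suffices, so the element of order $(\ell+1)/2$ is superfluous.

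In the local part, your expectation that wild ramification occurs only at $q=\ell$, with tame hence cyclic inertia elsewhere, is wrong for $f_{13}$. There the wild prime is $q=3$, with $(I,D)=(S_3,D_{12})$ and non-cyclic inertia, while $13$ is tame with $(I,D)=(C_2,C_6)$. In $L_{19}$ the prime $19$ is unramified, and $17,p_3,p_4$ are all tame.

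Your method still survives, because Lemma~\ref{lem:orbit-dictionary} uses no tameness. The paper enumerates all pairs $I\trianglelefteq D\le\PSL_2(\F_\ell)$ compatible with the factorization type; at $q=3$ this is a superset of the possible wild pairs. It then checks that every candidate gives the same $(e(q),f(q))$, here $(6,2)$. This uniqueness has to be verified case by case rather than assumed: in general $f(q)=|D/I|$ need not equal the least common multiple of the residue degrees in $K_\ell$.
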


The last condition is exactly the condition (2) of B\"oge's criterion  which we restate in Theorem \ref{thm:boege}. Assuming Theorem~\ref{thm:stems}, we prove Theorem~\ref{thm:main} in
\S\ref{subsec:proof-of-main}.

\begin{example}\label{ex:f13}
For $\ell=13$ one may take the following polynomial.
\input{polynomial13}By Proposition~\ref{prop:l13-projective} and Lemma~\ref{lem:l13-local}, its splitting field is a totally real $\PSL_2(\F_{13})$-extension of $\Q$,
ramified exactly at
\[
 2,\quad3,\quad13,\quad5119,\quad
 1854089821344611,\quad2086114950053051.
\]

\end{example}

The degree-$20$ polynomial $f_{19}$ has coefficients of up to $361$ decimal
digits, so we print it in Appendix~\ref{app:f19} rather than here.  By
Lemma~\ref{lem:l19-local} its splitting field is ramified exactly at $17$, at
$5605357159$, and at another prime of $100$ digits.

\begin{remark}\label{rem:not-claimed}
The polynomials $f_{13}$ and $f_{19}$ have Galois groups $\PSL_2(\F_{13})$ and $\PSL_2(\F_{19})$.  B\"oge's theorem only proves the existence of the fields $M_\ell$ of Theorem \ref{thm:main}. We made the lift for $\ell=13$ explicit using Crespo's
method~\cite{Crespo1997}, but the resulting polynomial has very large coefficients (on average order of $10^{180000}$) and we do not include it in this paper. Accompanying files contain the polynomial with Galois group $\SL_2(\F_13)$. 
\end{remark}

\subsection{Constructing the polynomials}\label{subsec:intro-constructions}
We explain how the two polynomials $f_\ell$ were found. They were obtained through different methods.

For $\ell=13$, we start from covers of $\PP^1$, branched over four points, whose branch
cycles lie in the conjugacy classes $2A,2A,2A,6A$ of $\PSL_2(\F_{13})$.  The associated absolute Hurwitz curve is a rational curve described by a degree $54$ Belyi map. The corresponding inner Hurwitz curve is also rational and parametrizes a family of degree $14$ covers with genus one source curves. Each rational point of the inner Hurwitz curve
  therefore determines one such cover defined over $\Q$. Specializing a chosen cover at a rational parameter for which all fourteen sheets are real produces $f_{13}$.  This construction is not used in the proof of Theorem \ref{thm:stems}. The Galois group of
$f_{13}$ is certified independently in two ways. First, we certify it by a squarefree
triple-sum resolvent of degree $364$ that splits as $182+182$, and secondly, a Magma script confirms our proof. We describe
the construction of $f_{13}$ in Appendix~\ref{app:f13-construction}.

For $\ell=19$, we begin with Yang's \cite{YangModularEquations}
modular equation of level $19$ on the Shimura curve $X_6^*(1)$.  Its analytic branches are indexed by twenty level-$19$ Hecke
cosets. An explicit reduction of the elliptic generators of the
quaternion order proves that the monodromy of the geometric cover is $\PGL_2(\F_{19})$. We use the discriminant square class to cut out the $\PSL_2(\F_{19})$ subcover. Since Yang's cover has quite large coefficients, we first replace and identify it by an elliptic curve providing a degree $20$ cover. After a quadratic base change that kills the sign character, it has harmonic branch values, i.e., with cross ratio $-1$. The quadratic base change gives a cover with a genus 2 domain. We move to another point on the rational inner Hurwitz curve it lies on, and get a specific rational cover $F_*$. A totally real specialization produces the polynomial $f_{19}$. Let $G$ be the Galois group of $f_{19}$. We obtain an upper bound of $G$ using the arithmetic monodromy of $F_*$. Factorization modulo small primes provide a lower bound, and we prove the equality $G=\PSL_2(\F_{19})$. An independent Magma calculation verifies the same conclusion.

\begin{remark}\label{rem:regular}
Both of the families constructed above are branched over four points of
$\PP^1$, and in each case we exhibit a specialization for which the central
embedding problem is solvable.  This is the setting in which Kl\"uners
upgraded his $\ell=11$ example to an unconditional \emph{regular}
realization, using Mestre's theorem on elements of $\mathrm{Br}_2(k(x))$ with four poles \cite{Mestre1994b}. The similarity with Mestre’s construction suggests that his argument may also apply to these two families, but we have not checked this.
\end{remark}

\subsection{Overview}\label{subsec:intro-outline}
In \S\ref{sec:common}, we recall B\"oge's criterion, prove the necessary lemmas concerning the local invariants among others. Using them, we deduce
Theorem~\ref{thm:main} from Theorem~\ref{thm:stems}.  In \S\ref{sec:l13}, we
prove Theorem~\ref{thm:stems} for $\ell=13$. We first prove that the Galois group of the splitting field of $f_{13}$ is $\PSL_2(\F_{13})$, which can also be checked using Magma. Afterwards we compute the local invariants of the field and invoke B\"oge's criterion.  The Hurwitz construction behind $f_{13}$ plays
no part in that proof and is deferred to
Appendix~\ref{app:f13-construction}.  In \S\ref{sec:l19}, we
prove Theorem~\ref{thm:stems} for $\ell=19$. The construction of the polynomial defining the projective stem field comes from Yang's~\cite{YangModularEquations}  modular equations and we include the construction in the same section.  In \S\ref{sec:consequence}, we present a corollary using Ramakrishna's deformation theorem \cite{Ramakrishna2002}. In \S\ref{sec:computations}, we present the necessary scripts and talk about their reproducibility.

\subsection{Notation}\label{subsec:notation}
Throughout, $\ell\in\{13,19\}$ and $q$ denotes a rational prime. In the cases $\ell$ denotes an arbitrary prime, we will explicitly state so. We write $\GQ=\Gal(\overline\Q/\Q)$, $\Omega_\ell=\PP^1(\F_\ell)$, and $B$ for a point stabilizer in $\PSL_2(\F_\ell)$,
that is, the image of a Borel subgroup.  The fields
\[
 K_\ell\subset L_\ell\subset M_\ell
\]
are, respectively, the projective stem field $\Q[x]/(f_\ell)$, its Galois closure, and the $\SL_2(\F_\ell)$ extension produced by Theorem~\ref{thm:main}.

For a rational prime $q$ and a Galois extension $L/\Q$, all primes of $L$
above $q$ have the same ramification index and residue degree, and we denote them by $e(q)$ and $f(q)$.  We reserve the subscripted symbols $f_{13}$ and
$f_{19}$ for the two polynomials of Theorem~\ref{thm:stems}. In a non-Galois field the
factorization of $q$ is recorded as a list of pairs $(e_i,f_i)$, with
multiplicities written as exponents.

Conjugacy classes of a finite group are named as in the
\textsc{Atlas}~\cite{Atlas1985}, and a transitive group of degree $n$ is
named $n T k$ as in the library of Hulpke~\cite{TransGrp}. We write a cycle type multiplicatively, i.e., $1^2 2^6$ is a permutation of $14$ points with two
fixed points and six transpositions.

\subsection*{Acknowledgements}

I would like to thank Ravi Ramakrishna for introducing me to this problem many years ago. I would also like to thank David Zywina for inspiring me regarding Inverse Galois Problem. 

The author used ChatGPT and Claude for exploratory literature searches, assistance with computer algebra code, and editorial feedback. The text in this paper is fully written by the author. The author assumes full responsibility for the contents of this paper. 
The computations in this paper were carried out with
SageMath~\cite{SageMath}, PARI/GP~\cite{PARI2}, GAP~\cite{GAP} and Magma~\cite{Magma}. The accompanying files can be found in 
\begin{center}
    \href{https://github.com/eekarabiyik/SL2F13}{https://github.com/eekarabiyik/SL2F13}.
\end{center}

\section{From a projective stem to \texorpdfstring{$\SL_2(\F_\ell)$}
  {SL2(Fl)}}\label{sec:common}

\subsection{The embedding problem}\label{subsec:embedding}
Let $L/\Q$ be Galois with group $\PSL_2(\F_\ell)$ and let
$\varphi\colon\GQ\twoheadrightarrow \PSL_2(\F_\ell)$ be the associated surjection.  A
\emph{solution} of the embedding problem attached to $\varphi$ and the exact sequence
\eqref{eq:central-extension} is a continuous lift of $\varphi$ to $\SL_2(\F_\ell)$, equivalently a continuous homomorphism
$\rho\colon\GQ\to\SL_2(\F_\ell)$ whose composition with the projection
$\SL_2(\F_\ell)\to \PSL_2(\F_\ell)$ is $\varphi$. It is called \emph{proper} if it is surjective, equivalently if the fixed field of $\ker(\rho)$ has Galois group $\SL_2(\F_\ell)$ over $\Q$.

The group $\SL_2(\F_\ell)$ is perfect so the extension \eqref{eq:central-extension} is nonsplit for every
$\ell\geq5$. The congruence
$\ell\equiv3,5\pmod8$ that appears below controls a different kind of splitting.
Assume $\ell\geq 11$. The natural action of $\PSL_2(\F_\ell)$ gives an embedding of $\PSL_2(\F_\ell)$ into $A_{\ell+1}$. Let $\widetilde A_{\ell+1}$ denote the unique nontrivial double cover of $A_{\ell+1}$, which is also called the \emph{Schur cover}. Restricting this double cover to $\PSL_2(\F_\ell)$ produces a central extension with kernel $\{\pm1\}$.  When $\ell\equiv3,5\pmod8$, the resulting extension is $\SL_2(\F_\ell)$, and otherwise it is the split extension $\{\pm I\}\times \PSL_2(\F_\ell)$.
  In the first case, the obstruction to the embedding problem is computed by Serre's trace-form formula \cite{Serre1984}. B\"oge turns this obstruction into the
arithmetic local criterion used below. 

\subsection{B\"oge's criterion}\label{subsec:boege}

\begin{theorem}[B\"oge]\label{thm:boege}
Let $\ell$ be a prime with $\ell\equiv3$ or $5\pmod 8$ and let $L/\Q$ be a
Galois extension with $\Gal(L/\Q)\cong\PSL_2(\F_\ell)$.  The embedding
problem attached to $L$ and \eqref{eq:central-extension} has a proper
solution if and only if
\begin{enumerate}
 \item $L$ is totally real, and
 \item for every odd prime $q$ with $e(q)$ even,
 \[
   f(q)\ \text{is odd}
   \quad\Longleftrightarrow\quad
   q\equiv1\pmod4 .
 \]
\end{enumerate}
\end{theorem}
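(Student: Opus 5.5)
The plan is to compute the obstruction class in $H^2(\GQ,\{\pm1\})=\mathrm{Br}_2(\Q)$ and then read off its local invariants. Let $K=L^B$ be a projective stem field of degree $n=\ell+1$, so $\varphi$ composed with the permutation action on $\Omega_\ell$ gives $\GQ\to A_{n}$. The image lies in $A_n$ because $\PSL_2(\F_\ell)$ is perfect, so $d_K$ is a square. By the discussion in \S\ref{subsec:embedding}, for $\ell\equiv3,5\pmod8$ the pullback of $\widetilde A_{n}$ to $\PSL_2(\F_\ell)$ is exactly \eqref{eq:central-extension}. The obstruction to lifting $\varphi$ is therefore the pullback of the class of $\widetilde A_n$, and Serre's trace-form formula \cite{Serre1984} gives
\[
 \varepsilon(\varphi)=w_2(\mathrm{Tr}_{K/\Q}(x^2))
\]
in $\mathrm{Br}_2(\Q)$, with the discriminant correction term vanishing since $d_K$ is a square. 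By Hilbert reciprocity it suffices to show that the local invariants at $\infty$ and at every odd prime $q$ vanish exactly under conditions (1) and (2). The invariant at $2$ is then forced.

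Next I compute the local invariants.

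At $\infty$, the trace form has signature $(r_1+r_2,r_2)$ and its Hasse–Witt invariant at $\infty$ is $(-1)^{r_2(r_2-1)/2}$. This alone does not force $r_2=0$. So I would use the structure of $\PSL_2(\F_\ell)$ instead: complex conjugation is an involution, and involutions in $\PSL_2(\F_\ell)$ act on $\PP^1(\F_\ell)$ with cycle type $2^{n/2}$ or $1^22^{(n-2)/2}$. For $\ell\equiv3,5\pmod8$ one checks that the number of transpositions is $\equiv2\pmod4$ or the involution lifts to an element of order $4$ in $\SL_2$. Directly, every involution of $\PSL_2(\F_\ell)$ lifts to an element of order $4$ in $\SL_2(\F_\ell)$. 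A lift $\rho$ would then send complex conjugation to an element of order dividing $2$ mapping to a nontrivial element, which is impossible. Hence solvability forces $L$ to be totally real, and conversely total reality kills the local invariant at $\infty$.

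At an odd prime $q$, I restrict to the decomposition group $D_q$. The local obstruction is whether $\varphi|_{D_q}$ lifts. If $e(q)$ is odd, the inertia group, which is tamely cyclic, has odd order, and the decomposition group is metacyclic with odd-order inertia. Then the lift exists: the odd part lifts uniquely, and the unramified Frobenius lifts freely. If $e(q)$ is even, inertia is generated by an element $\tau$ of even order, and Frobenius $\sigma$ satisfies $\sigma\tau\sigma^{-1}=\tau^q$. Choose lifts $\tilde\tau$ and $\tilde\sigma$. Because every element of even order in $\PSL_2(\F_\ell)$ lifts to an element whose order is twice as large, $\tilde\tau$ has order $2e$ and the relation lifts only up to a sign: $\tilde\sigma\tilde\tau\tilde\sigma^{-1}=\pm\tilde\tau^q$. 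The local lift must satisfy the tame relation with a $\mu$-valued character. Counting shows that the sign equals $(-1)^{f(q)}$ twisted by $\left(\frac{-1}{q}\right)$, that is, by whether $-1$ is a square in $\Q_q$, which governs the twist of inertia by the quadratic character. The local obstruction therefore vanishes iff $f(q)$ odd $\Leftrightarrow q\equiv1\pmod4$.

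Finally, I pass from a solution to a proper one. Any solution $\rho$ of a central embedding problem with kernel $\{\pm1\}$ and nonsplit extension is automatically surjective. Its image maps onto $\PSL_2(\F_\ell)$, and a proper subgroup of index $2$ mapping isomorphically onto $\PSL_2(\F_\ell)$ would split the extension, contradicting that $\SL_2(\F_\ell)$ is perfect. Global solvability follows from the vanishing of all local invariants by Hilbert reciprocity and the Albert–Brauer–Hasse–Noether theorem.

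I expect the main obstacle to be the sign computation at odd $q$ with $e(q)$ even, namely relating the lifted tame relation to $f(q)$ and $q\bmod 4$ cleanly. As a first attempt I would carry out this computation concretely inside $\SL_2(\F_\ell)$, treating separately the cases where $\tau$ is split semisimple, nonsplit semisimple, or unipotent (the last has odd order $\ell$, so it does not arise here).
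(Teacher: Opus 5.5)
\textbf{There is a genuine gap in the local analysis at odd primes.} The global framework is sound. The obstruction lies in $\mathrm{Br}_2(\Q)$ and vanishes iff all its local restrictions do, and reciprocity disposes of $q=2$. Your archimedean argument is correct: $-I$ is the only involution in $\SL_2(\F_\ell)$, so an involution of $\PSL_2(\F_\ell)$ has only lifts of order $4$. Properness follows from perfectness. (The paper does not prove this statement; it quotes B\"oge, whose route is Serre's trace-form formula, which your approach does not need.) The gap shows up in two places.

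First, you assume $I$ is cyclic and tame. The criterion has no tameness hypothesis, and odd primes dividing $|\PSL_2(\F_\ell)|$ can be wildly ramified. In Table~\ref{tab:l13-local} the prime $q=3$ has $I\cong S_3$ and $D\cong D_{12}$, so there is no generator $\tau$ of $I$ and no relation $\sigma\tau\sigma^{-1}=\tau^q$ to lift.

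Second, the decisive step (``counting shows that the sign equals\dots'') is not carried out, and it cannot be without $\ell\equiv3,5\pmod8$, which your local argument never uses. If the argument were valid it would apply verbatim for $\ell\equiv\pm1\pmod8$, where it fails. There $C_4\le\PSL_2(\F_\ell)$, and a tame prime with $D=I\cong C_4$ has $f=1$ and $q\equiv1\pmod4$, yet the lifted relation holds only for $q\equiv1\pmod8$. For $e=2$, the case at most primes in this paper, you have $\tilde\tau^q=\pm\tilde\tau$. The sign then depends on $q\bmod4$ and on whether $\sigma\in T$ or $\sigma\in N(T)\smallsetminus T$ for the cyclic torus $T\ni\tau$. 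Only $v_2(|T|)=1$ (again from $\ell\equiv3,5\pmod8$) converts this into the parity of $f$. Also, read literally, your formula $(-1)^{f(q)}\bigl(\frac{-1}{q}\bigr)$ gives the opposite parity condition.

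\textbf{A single reduction closes both gaps.} Let $S$ be a Sylow $2$-subgroup of $D$, and let $F=L_{\mathfrak P}^S$ inside the completion $L_{\mathfrak P}$.
\begin{enumerate}
\item Since $[F:\Q_q]$ is odd, restriction $\mathrm{Br}_2(\Q_q)\to\mathrm{Br}_2(F)$ is injective. So the local problem is solvable iff the one for $S$ and its preimage in $\SL_2(\F_\ell)$ is.
\item The $2$-adic valuations of $e(q)$ and $f(q)$ equal those of $e(L_{\mathfrak P}/F)$ and $f(L_{\mathfrak P}/F)$, and $-1$ is a square in the residue field $k_F$ iff $q\equiv1\pmod4$.
\item For $\ell\equiv3,5\pmod8$ the Sylow $2$-subgroups of $\PSL_2(\F_\ell)$ are Klein four groups with preimage $Q_8$. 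Hence $S\in\{1,C_2,V_4\}$, and $L_{\mathfrak P}/F$ is tame because its degree is a power of $2$.
\item If $S=C_2$ and $L_{\mathfrak P}=F(\sqrt a)$, the $C_4$-obstruction is $(a,-1)_F$. It is trivial when $a$ is a unit, and equals $\bigl(\frac{-1}{k_F}\bigr)$ when $a$ is a uniformizer, which is the case $e$ even, $f$ odd.
\item If $S=V_4$, then $L_{\mathfrak P}=F(\sqrt u,\sqrt\pi)$ with $u$ a nonsquare unit, so $e$ and $f$ are both even. Witt's $Q_8$-obstruction is $(u,\pi)_F(u\pi,-1)_F=-\bigl(\frac{-1}{k_F}\bigr)$, which vanishes iff $q\equiv3\pmod4$.
\end{enumerate}
This gives condition~(2) at every odd prime, tame or wild.
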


This is an English restatement of Satz~1 of \cite{Boege1990}*{p.~153} which Kl\"uners also states as \cite{Klueners2000}*{Theorem~1.1}. There is no tameness hypothesis here. In particular, a wildly
ramified prime is allowed, and one occurs for $\ell=13$. There is also no condition at $2$.  Once the archimedean and the odd
local invariants vanish, Brauer--Hasse--Noether reciprocity forces the
dyadic invariant to vanish as well.

\subsection{Local invariants from a projective stem field}\label{subsec:orbit-dictionary}
  Condition~(2) of Theorem~\ref{thm:boege} is stated in terms of the
  ramification index and residue degree in the Galois extension $L$. However, direct computations in $L$ are impractical, so instead we factor the prime in the degree $(\ell+1)$ projective stem field $K=L^B$. The following standard lemma translates this factorization into the orbit data of the inertia and decomposition groups, from which the required indices can be recovered. Compare Theorem~3.1 of~\cite{BrumerKramer2012}. We include a proof in the
  precise form needed here.

\begin{lemma}\label{lem:orbit-dictionary}
Let $L/\Q$ be a finite Galois extension whose group $G$ acts faithfully and
transitively on a finite set $\Omega$, let $B$ be a point stabilizer, and put
$K=L^B$.  Fix a prime of $L$ above $q$ and let $I\trianglelefteq D\leq G$ be
its inertia and decomposition groups.  Then the primes of $K$ above $q$
correspond to the $D$-orbits on $\Omega$. An orbit $\mathcal O\ni\omega$ corresponds to a prime with
\begin{equation}\label{eq:orbit-dictionary}
 e(\mathcal O)=|I\omega|,
 \qquad
 f(\mathcal O)=\frac{|D\omega|}{|I\omega|} .
\end{equation}
\end{lemma}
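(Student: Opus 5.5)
We prove the lemma with the standard double-coset description of primes in a non-Galois subfield. Since $G$ acts transitively on $\Omega$ with stabilizer $B$ of a base point $\omega_0$, we identify $\Omega$ with $G/B$ via $gB\mapsto g\omega_0$. Fix the prime $\mathfrak P$ of $L$ above $q$ with decomposition group $D$ and inertia group $I$. The primes of $L$ above $q$ are the conjugates $g\mathfrak P$, and $G$ permutes them transitively. The primes of $K=L^B$ above $q$ are the $B$-orbits of primes of $L$ above $q$, that is, the sets $\{bg\mathfrak P : b\in B\}$. Because the stabilizer of $\mathfrak P$ is $D$, these correspond to double cosets $B\backslash G/D$. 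Inverting, $g\mapsto g^{-1}$ gives a bijection with $D\backslash G/B$, which is exactly the set of $D$-orbits on $G/B\cong\Omega$. Concretely, the prime $\mathfrak p_g:=g\mathfrak P\cap K$ corresponds to the orbit $D g^{-1}\omega_0$.

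Next we compute $e$ and $f$ of $\mathfrak p_g$. Replacing $\mathfrak P$ by $g\mathfrak P$ replaces $(D,I)$ by $(gDg^{-1},gIg^{-1})$. So it suffices to treat $g=1$ and the base point $\omega_0$, then transport. For the prime $\mathfrak p=\mathfrak P\cap K$, the decomposition and inertia groups of $\mathfrak P$ over $K$ are $D\cap B$ and $I\cap B$. Hence
\[
e(\mathfrak P/\mathfrak p)=|I\cap B|,\qquad e(\mathfrak P/\mathfrak p)f(\mathfrak P/\mathfrak p)=|D\cap B|.
\]
Multiplicativity in the tower $\Q\subset K\subset L$ then gives
\[
e(\mathfrak p/q)=|I|/|I\cap B|=|I\omega_0|,\qquad e(\mathfrak p/q)f(\mathfrak p/q)=|D|/|D\cap B|=|D\omega_0|,
\]
by orbit–stabilizer, since $I\cap B$ and $D\cap B$ are the stabilizers of $\omega_0$ in $I$ and $D$. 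This yields \eqref{eq:orbit-dictionary} for the orbit of $\omega_0$. For a general $g$, conjugation gives $|gIg^{-1}\omega_0|=|I g^{-1}\omega_0|$, and likewise for $D$, so the formula holds for $\omega=g^{-1}\omega_0$.

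Finally we check that the formula does not depend on the choice of $\omega$ in the orbit $\mathcal O$. Since $I\trianglelefteq D$, for $d\in D$ we have $I d\omega=d(I\omega)$, so $|I\omega|$ is constant on $D$-orbits. The quantity $|D\omega|$ is constant on $\mathcal O$ trivially.

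The main point needing care is the bookkeeping in the first step: matching double cosets $B\backslash G/D$ with $D$-orbits on $G/B$ through the inversion, and keeping track of which conjugate of $\mathfrak P$ is attached to which orbit representative. Faithfulness of the action is not needed, except to identify $G$ with a permutation group.
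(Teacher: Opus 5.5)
Your proof is correct and follows essentially the paper's route. Like the paper, you identify $\Omega$ with $G/B$, index the primes of $K$ by the double cosets $D\backslash G/B$, obtain $e$ and $ef$ from the stabilizers in $I$ and $D$ by local Galois theory and orbit--stabilizer, and use normality of $I$ in $D$ to show the formula does not depend on $\omega$. Your tracking of which conjugate of the fixed prime of $L$ corresponds to the orbit of $g^{-1}\omega_0$ simply makes explicit the bookkeeping the paper compresses into the stabilizers $D\cap gBg^{-1}$ and $I\cap gBg^{-1}$.
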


\begin{proof}
Identify $\Omega$ with $G/B$.  The primes of $K=L^B$ above $q$ are indexed by
the double cosets $D\backslash G/B$, hence by the $D$-orbits on $G/B$.  If
$\omega=gB$ then the stabilizers of $\omega$ in $D$ and in $I$ are
$D\cap gBg^{-1}$ and $I\cap gBg^{-1}$, and local Galois theory gives
\[
 e(\mathcal O)=\frac{|I|}{|I\cap gBg^{-1}|}=|I\omega|,
 \qquad
 e(\mathcal O)f(\mathcal O)=\frac{|D|}{|D\cap gBg^{-1}|}=|D\omega|.
\]
This proves the assertion.  Since $I$ is normal in $D$, the
$I$-orbits inside a fixed $D$-orbit all have the same size, so
$e(\mathcal O)$ does not depend on the choice of $\omega\in\mathcal O$.
\end{proof}

In our applications, we use Lemma~\ref{lem:orbit-dictionary} in the converse direction.  Using GAP and PARI/GP we obtain the factorization type of $q$ in $K_\ell$ and
the different exponents. Afterwards, we do a finite search to list every pair $I\trianglelefteq D\leq \PSL_2(\F_\ell)$ producing that factorization type, and observe that in each case of
interest, there is only one possibility for $(e(q),f(q))$. For a tame prime, the search is easier since $I$ and $D/I$ are cyclic, and a Frobenius lift conjugates a generator of inertia to its $q$th power. For the
only wild prime that occurs, at $q=3$ in $K_{13}$, the search
retains a superset of the arithmetically possible pairs but then we observe that still the same numerical pairs $(e(q),f(q))$ are obtained for every candidate pair in the superset.

\begin{lemma}\label{lem:no-hidden-ramification}
With notation as in Lemma~\ref{lem:orbit-dictionary}, a prime that is
unramified in $K$ is unramified in $L$.
\end{lemma}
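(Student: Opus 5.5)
The plan is to use Lemma~\ref{lem:orbit-dictionary} together with the faithfulness of the action. Suppose $q$ is unramified in $K=L^B$. Then every prime of $K$ above $q$ has ramification index $1$. By Lemma~\ref{lem:orbit-dictionary}, this means $|I\omega|=1$ for every $\omega$ in every $D$-orbit. The $D$-orbits partition $\Omega$, so $|I\omega|=1$ for every $\omega\in\Omega$.

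Next, observe that this says $I$ fixes every point of $\Omega$. The action of $G$ on $\Omega$ is faithful, so the only element acting trivially is the identity, and therefore $I=1$. Hence $e(q)=|I|=1$, and $q$ is unramified in $L$.

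An equivalent way to phrase the argument avoids orbits: $I$ is contained in $\bigcap_{g}gBg^{-1}$, which is the kernel of the action on $G/B$ and is trivial by faithfulness. One could also argue without any group theory, since $L$ is the Galois closure of $K$, i.e.\ the compositum of the conjugates of $K$, and a compositum of fields unramified at $q$ is unramified at $q$. I would mention this as a remark but give the orbit argument, since it uses the lemma just proved.

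There is no real obstacle here. The only point to state explicitly is that the conclusion $|I\omega|=1$ is needed for all $\omega$, not just one. This follows because the primes of $K$ above $q$ exhaust all $D$-orbits, and hence all of $\Omega$.
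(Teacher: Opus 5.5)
Your proposal is correct and follows essentially the same argument as the paper. You use Lemma~\ref{lem:orbit-dictionary} to show that every $I$-orbit on $\Omega$ is a singleton, so $I$ acts trivially, and faithfulness then forces $I=1$. Your remark that the conclusion must hold for every $\omega$ because the $D$-orbits exhaust $\Omega$, and your alternative compositum argument, are both sound.
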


\begin{proof}
If $q$ were ramified in $L$ but not in $K$, then every $I$-orbit on $\Omega$
would be a singleton by \eqref{eq:orbit-dictionary}, so $I$ would act
trivially on $\Omega$.  As $G$ acts faithfully this forces $I=1$.

\end{proof}

\subsection{Totally real solutions}\label{subsec:twist}
The fixed field of a proper solution supplied by B\"oge's theorem does not have to be totally real. In our case, we can obtain one by a quadratic twist.
\begin{lemma}\label{lem:central-twist}
Let $\ell>3$ and let $L/\Q$ be a totally real Galois extension with group
$\PSL_2(\F_\ell)$.  If the embedding problem attached to $L$ and
\eqref{eq:central-extension} has a proper solution, then it has a proper
solution whose fixed field is totally real.
\end{lemma}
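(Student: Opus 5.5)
Let $\rho\colon\GQ\to\SL_2(\F_\ell)$ be a proper solution and let $M$ be the fixed field of $\ker\rho$. Since $M\supset L$ and $L$ is totally real, the only issue is complex conjugation. Fix a complex conjugation $c\in\GQ$. Its image $\varphi(c)$ is trivial because $L$ is totally real, so $\rho(c)\in\{\pm I\}$. If $\rho(c)=I$ for every complex conjugation, then $M$ is already totally real. All complex conjugations are conjugate in $\GQ$ and $\pm I$ are central, so $\rho(c)$ is independent of the choice of $c$. It therefore suffices to treat the case $\rho(c)=-I$.

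In that case I would twist by a quadratic character. Let $\chi\colon\GQ\to\{\pm1\}$ be the character of $\Q(\sqrt{-1})$, which satisfies $\chi(c)=-1$, and put $\rho'=\chi\rho$, viewing $\chi(\sigma)\in\{\pm I\}\subset\SL_2(\F_\ell)$. Because $\pm I$ is central and $\chi$ is a homomorphism, $\rho'$ is a continuous homomorphism. It still lifts $\varphi$, since $\pm I$ maps to the identity in $\PSL_2(\F_\ell)$. It also satisfies $\rho'(c)=(-1)(-I)=I$, so its fixed field $M'$ is totally real by the same conjugacy argument.

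The remaining point is properness, and I expect it to be the main step, though it is short. The image $H=\rho'(\GQ)$ surjects onto $\PSL_2(\F_\ell)$. Suppose $H\ne\SL_2(\F_\ell)$. Then $H\cap\{\pm I\}=1$, so $H$ is a complement and the extension \eqref{eq:central-extension} splits. But for $\ell>3$ the group $\SL_2(\F_\ell)$ is perfect, and the extension is nonsplit, as recalled in \S\ref{subsec:embedding}. Concretely, $H\cong\PSL_2(\F_\ell)$ would have index $2$, hence be normal with abelian quotient, contradicting perfectness. Hence $\rho'$ is surjective, and $M'$ is a totally real $\SL_2(\F_\ell)$-extension. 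This argument needs $\ell>3$, which is exactly the hypothesis of the lemma. It is also independent of the original properness of $\rho$, since any lift of $\varphi$ is automatically proper.
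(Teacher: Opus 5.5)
Your proof is correct and follows the paper's argument step by step. You twist by an imaginary quadratic character when $\rho(c)=-I$, note that central twists still lift $\varphi$, and use perfectness of $\SL_2(\F_\ell)$ to exclude an index-$2$ complement. Your closing remark that any lift of $\varphi$ is automatically proper also follows from that same surjectivity step in the paper's proof.
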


\begin{proof}
Note that $\SL_2(\F_\ell)$ is perfect. 
Let $\varphi\colon\GQ\twoheadrightarrow \PSL_2(\F_\ell)$ be the surjection attached to the Galois extension
$L$ and let $\rho$ be a proper solution to the embedding problem whose kernel cuts out a field with Galois group $\SL_2(\F_\ell)$. Let $c\in\GQ$ be a complex
conjugation.  Since $L$ is a totally real field, $\varphi(c)=1$ and $\rho(c)\in\{\pm I\}$.  If
$\rho(c)=I$, take $\chi$ to be the trivial character. Otherwise, let
$\chi\colon\GQ\to\{\pm I\}$ be the quadratic character of an imaginary
quadratic field.  In both cases put $\rho^\chi(g)=\chi(g)\rho(g)$. Since $\chi$ takes values in the center, the projection of $\rho^\chi$ is still $\varphi$ and $\rho^\chi$ is again a solution to the embedding problem. We have $\rho^\chi(c)=I$ by construction. 

It remains to show that $\rho^\chi$ is still surjective.  Its image maps onto
$\PSL_2(\F_\ell)$, so it is either all of $\SL_2(\F_\ell)$ or a complement of
$\{\pm I\}$, in which case its index is $2$.  The latter is impossible because a perfect
group has no subgroup of index $2$. Since all complex conjugations in $\GQ$ are conjugate, the fixed
field of $\ker\rho^\chi$ is totally real. 
\end{proof}

\subsection{Proof of Theorem~\ref{thm:main}}\label{subsec:proof-of-main}
Let $\ell\in\{13,19\}$ and let $L_\ell$ be the splitting field of $f_\ell$. By Theorem~\ref{thm:stems}, we have $\Gal(L_\ell/\Q)\cong\PSL_2(\F_\ell)$, and
$L_\ell$ is totally real. 
The last assertion of
Theorem~\ref{thm:stems} satisfies condition~(2) of B\"oge's criterion \ref{thm:boege} and since $13\equiv5\pmod8$ and
$19\equiv3\pmod8$, Theorem~\ref{thm:boege} applies and yields a proper solution $\rho$ of the embedding problem attached to $L_\ell$. The fixed field $M_\ell$ of $\ker\rho$ has $\Gal(M_\ell/\Q)\cong\SL_2(\F_\ell)$, and it can be chosen totally real by Lemma~\ref{lem:central-twist}.

\qed

\medskip

The next two sections prove Theorem~\ref{thm:stems}.

\section{The case \texorpdfstring{$\ell=13$}{l=13}}\label{sec:l13}

\subsection{The Galois group of \texorpdfstring{$f_{13}$}{f13}}
\label{subsec:l13-group}
The argument in this subsection uses nothing but the coefficients of
$f_{13}$. Proposition \ref{prop:l13-projective} proves the needed result using group theory, but it can be explicitly checked using Magma, cf. \S\ref{sec:computations}.

\begin{proposition}\label{prop:l13-projective}
The polynomial $f_{13}$ written in Example~\ref{ex:f13} is irreducible, all fourteen
of its roots are real, and its splitting field $L_{13}$ satisfies
\[
 \Gal(L_{13}/\Q)\cong\PSL_2(\F_{13})
\]
in the natural action of $\PSL_2(\F_{13})$ on $\PP^1(\F_{13})$.
\end{proposition}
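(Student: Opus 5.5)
We will certify the Galois group of $f_{13}$ from its coefficients alone, by an upper bound and a lower bound on $G=\Gal(L_{13}/\Q)$ viewed as a transitive subgroup of $S_{14}$.

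\emph{Irreducibility and reality.} Irreducibility follows from factoring $f_{13}$ modulo a suitable prime $p\nmid\Disc(f_{13})$ where it stays irreducible. Failing that, combine two primes whose factorization degrees are incompatible with any proper splitting. Reality of all fourteen roots is checked by Sturm's theorem, or by isolating fourteen sign changes of $f_{13}$ at explicit rational points. This also shows that complex conjugation acts trivially, so $L_{13}$ is totally real.

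\emph{Upper bound.} We form the triple-sum resolvent $R(x)=\prod_{\{i,j,k\}}(x-\alpha_i-\alpha_j-\alpha_k)$ of degree $\binom{14}{3}=364$. It is computed exactly via power sums and Newton identities, or via resultants. We check that $R$ is squarefree and factors over $\Q$ as $182+182$. By the standard resolvent principle, $G$ then has two orbits of size $182$ on $3$-subsets of the roots, so $G$ is contained in the stabilizer of that orbit partition inside $S_{14}$. From the table of transitive groups of degree $14$ (Hulpke's list), the transitive groups with exactly this orbit structure on $3$-sets are $\PSL_2(\F_{13})$ in its natural action on $\PP^1(\F_{13})$, together possibly with a few small subgroups of it. The full $\PGL_2(\F_{13})$ is excluded: it is $3$-transitive on $\PP^1(\F_{13})$, so it is transitive on $3$-subsets. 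The discriminant being a square gives an additional consistency check.

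\emph{Lower bound.} Frobenius cycle types come from factoring $f_{13}$ modulo several unramified primes. A $13$-cycle type $1\,13$ shows that $13\mid|G|$. A type $7^2$ gives an element of order $7$, and types with $3$- or $6$-cycles give $3\mid|G|$. Transitive subgroups of $\PSL_2(\F_{13})$ of degree $14$ with order divisible by $7\cdot13$ must be the whole group, since the maximal subgroups have orders $78$, $14$, $12$ and $24$. Combining the two bounds gives $G\cong\PSL_2(\F_{13})$, with the action on the roots equal to the natural action.

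\emph{Main obstacle.} We expect the hardest step to be making the resolvent step rigorous. The degree $364$ resolvent must be computed exactly with large coefficients, its squarefreeness verified, and the group-theoretic enumeration of $14$-point transitive groups with a $182+182$ orbit split carried out in GAP. If the resolvent computation becomes too heavy, we would fall back on Magma's certified \texttt{GaloisGroup}, which the paper mentions as an independent check.
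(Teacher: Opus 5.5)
Your proof is correct, but it arranges the same ingredients differently from the paper.

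\textbf{How the two routes differ.} The paper uses the square discriminant ($G\le A_{14}$) and the $1\cdot 13$ pattern modulo $23$ (a $13$-cycle) to cut the $63$ transitive groups of degree $14$ down to $14T30\cong\PSL_2(\F_{13})$ and $A_{14}$. It then needs the $182+182$ resolvent only to exclude $A_{14}$, and it has no separate lower-bound step. You make the resolvent the main classifier, add a Frobenius lower bound ($7\cdot13$ divides $|G|$), and keep the discriminant only as a check. The paper's route asks less of the resolvent: any nontrivial factorization rules out $A_{14}$. Yours asks less of the Frobenius data for the upper bound.

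\textbf{What your route buys.} The classification step can be done by hand instead of by a library scan. A block system with blocks of size $7$ splits the $3$-subsets as $70+294$, and one with blocks of size $2$ splits them as $84+280$. Neither partition can be refined into $182+182$. So $G$ is primitive, hence one of $\PSL_2(\F_{13})$, $\PGL_2(\F_{13})$, $A_{14}$, $S_{14}$, and only the first is not transitive on $3$-subsets.

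In particular, your hedge ``possibly a few small subgroups'' is empty: an orbit of size $182$ forces $|G|\ge 182>78$, and $78$ is the largest order of a proper subgroup of $\PSL_2(\F_{13})$. So your lower-bound step is redundant, though harmless.

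It would also be cleaner to say directly that $G$ has exactly these two orbits, rather than that $G$ lies in the stabilizer of the orbit partition. The stabilizer of the unordered pair of orbits is a copy of $\PGL_2(\F_{13})$, which you then have to discard separately, as you do.

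\textbf{Two small corrections.}
\begin{enumerate}
\item Your first option for irreducibility cannot work. $\PSL_2(\F_{13})$ has element orders $1,2,3,6,7,13$ only, so it contains no $14$-cycle, and $f_{13}$ is reducible modulo every prime. You must use the two-prime argument, for example the $7+7$ pattern modulo $5$ and the $1+13$ pattern modulo $23$ used in the paper. These two primes also supply your elements of order $7$ and $13$.
\item Since $13\equiv5\pmod 8$, the fourth maximal subgroup of $\PSL_2(\F_{13})$ is $A_4$, of order $12$, not $24$. This does not affect your divisibility argument.
\end{enumerate}
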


\begin{proof}

Modulo $5$ the reduction of $f_{13}$ is a squarefree product of two
irreducible factors of degree $7$.  Modulo $23$ it factors as
\[
\begin{split}
 (x+8)(&x^{13}+9x^{12}+17x^{11}+19x^{10}+5x^9+15x^8+6x^7\\
 &+16x^6+19x^5+15x^4+12x^3+4x^2+21x+7)
\end{split}
\]
with the degree $13$ factor irreducible.  A nontrivial factor of $f_{13}$ over $\Q$ would have degree $7$ by the first reduction and degree $1$ or
$13$ by the second, so $f_{13}$ is irreducible and its Galois group $G$ is
transitive of degree $14$.

The discriminant of $f_{13}$ is a nonzero square, so $G\leq A_{14}$.  The
reduction modulo $23$ is squarefree, so $23\nmid\Disc(f_{13})$ and
so $G$ contains an element of cycle type
$1\cdot13$, that is, a $13$-cycle.  Among the $63$ transitive groups of
degree $14$ classified by Butler~\cite{Butler1993} exactly two are contained
in $A_{14}$ and contain a $13$-cycle, namely
\[
 14T30\cong\PSL_2(\F_{13})
 \qquad\text{and}\qquad
 14T62=A_{14} .
\]
We verified this by inspecting the entire library~\cite{TransGrp} in GAP.

Let $r_1,\ldots,r_{14}$ be the roots of $f_{13}$.  To each three-element set
$\{i,j,k\}$ attach the sum $r_i+r_j+r_k$, and put all these sums into one
polynomial
\[
 R_3(y)=\prod_{1\leq i<j<k\leq14}
       \bigl(y-(r_i+r_j+r_k)\bigr),
\]
which is the triple sum resolvent. We compute it exactly from the coefficients of $f_{13}$ with no numerical
approximation of any of the roots.

 Let $s_m=\sum_i r_i^m$. For $a\in\{1,2,3\}$, define
  \[
   P_a(z)=\sum_{m=0}^{364}\frac{a^ms_m}{m!}z^m.
  \]
  Then, modulo $z^{365}$,
  \[
   \sum_{i<j<k}\exp\bigl((r_i+r_j+r_k)z\bigr)
   =\frac{P_1(z)^3-3P_1(z)P_2(z)+2P_3(z)}6.
  \]
All equalities here are taken modulo $z^{365}$.  Newton identities compute
$s_1,\ldots,s_{364}$ directly from $f_{13}$.  For the sum,  $m!$ times the
coefficient of $z^m$ is
\[
 \sum_{i<j<k}(r_i+r_j+r_k)^m,
\]
the $m$-th power sum of the $364$ roots of $R_3$.  Applying Newton identities
a second time therefore reconstructs every coefficient of $R_3$.

 Each
coefficient of $R_3$ is a symmetric polynomial with integer coefficients in $r_1,\ldots,r_{14}$.  The fundamental theorem of symmetric polynomials
expresses it as an integer polynomial in the elementary symmetric functions
of the $r_i$ and consequently $R_3\in\Z[y]$.

We show, using Sage, that $R_3$ is squarefree and is the product
of two irreducible polynomials over $\Q$, each of degree $182$.
Thus
\[
 \{i,j,k\}\longmapsto r_i+r_j+r_k
\]
is a $G$-equivariant bijection from the three-subsets of the roots to the
roots of $R_3$.  Under this bijection, the irreducible factors of $R_3$
correspond exactly to the $G$-orbits.  Hence $G$ has two orbits, both of size
$182$, on the three-subsets.

The group $A_{14}$ is transitive on all $364$
three-subsets.  In contrast, the natural degree-$14$ action of $\PSL_2(\F_{13})$ has precisely two orbits of size $182$.  As a result, we conclude
that $G\cong\PSL_2(\F_{13})$ in its natural action. Sturm arithmetic in the accompanying
Sage file gives fourteen real roots~\cite{BasuPollackRoy2006}*{\S2.2.2}.

Additionally, an exact Magma script in the level $13$
code package proves irreducibility and, using \texttt{GaloisProof},
identifies the Galois group in its degree-$14$ action with
$\PSL_2(\F_{13})$. 
\end{proof}

\subsection{Ramification}\label{subsec:l13-local}
Write $K_{13}=\Q[x]/(f_{13})$ and set
\[
 p_1=1854089821344611,\qquad p_2=2086114950053051 ,
\]
both prime.  We have:
\begin{equation}\label{eq:l13-field-discriminant}
 \Disc(K_{13})=2^{22}\,3^{22}\,13^{6}\,5119^{6}\,p_1^{6}\,p_2^{6} .
\end{equation}

\begin{lemma}\label{lem:l13-local}
The primes ramified in $L_{13}$ are exactly $2,3,13,5119,p_1,p_2$, and at
each odd one the pair $(e(q),f(q))$ is as in Table~\textup{\ref{tab:l13-local}}.
\end{lemma}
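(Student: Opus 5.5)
The plan is to combine the discriminant factorization \eqref{eq:l13-field-discriminant} with Lemma~\ref{lem:no-hidden-ramification}, and then use Lemma~\ref{lem:orbit-dictionary} backwards at each odd ramified prime.

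\textbf{The set of ramified primes.} A prime ramifies in $K_{13}$ exactly when it divides $\Disc(K_{13})$. By \eqref{eq:l13-field-discriminant} these primes are $2,3,13,5119,p_1,p_2$. Each of them then ramifies in $L_{13}\supseteq K_{13}$. Conversely, Lemma~\ref{lem:no-hidden-ramification} shows that every prime unramified in $K_{13}$ is unramified in $L_{13}$. So the ramified set is exactly the one stated.

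\textbf{Factorization data at each odd ramified prime.} For $q\in\{3,13,5119,p_1,p_2\}$ I will compute in PARI/GP (via \texttt{idealprimedec} on a maximal order) the factorization pattern $\prod(e_i,f_i)$ of $q$ in $K_{13}$. I will also record the local discriminant exponents, which give the different exponents.

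\textbf{The tame primes $13,5119,p_1,p_2$.} Each appears to exponent $6$ in \eqref{eq:l13-field-discriminant}. Tame ramification contributes $\sum(e_i-1)$ to the discriminant exponent. An inertia generator $\sigma$ is an element of $\PSL_2(\F_{13})$ whose cycle type on $\PP^1(\F_{13})$ has $\sum(|\text{cycle}|-1)=6$. The nonidentity elements act on the $14$ points as follows:
\begin{itemize}[label={}]
\item involutions: $2^7$, contributing $7$;
\item order $3$: $1^2 3^4$, contributing $8$;
\item order $6$ and $7$: $2\cdot 6^2$ and $7^2$, contributing $12$;
\item order $13$: $1\cdot 13$, contributing $12$.
\end{itemize}
None of these contributes $6$, so the tame count is inconsistent. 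I therefore expect the discriminant exponent $6$ to reflect something else, for instance a mistake in my cycle-type bookkeeping; I will recheck that against the actual factorization pattern. Whatever the pattern turns out to be, the procedure is fixed:
\begin{enumerate}
\item Read $|I\omega|$ off the $e_i$ and $|D\omega|=e_if_i$ off the $f_i$, using Lemma~\ref{lem:orbit-dictionary}.
\item Enumerate pairs $I\trianglelefteq D$ with $I=\langle\sigma\rangle$ cyclic and $D/I$ cyclic, generated by a Frobenius lift $\phi$ satisfying $\phi\sigma\phi^{-1}=\sigma^{q}$.
\item Keep those pairs whose orbit structure on $\PP^1(\F_{13})$ matches, and read off $e(q)=|I|$ and $f(q)=|D/I|$.
\end{enumerate}
For $q=13$, which is tame in the sense of not dividing a wild $|I|$, the same search works. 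I will then check the B\"oge condition from the table.

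\textbf{The wild prime $q=3$ (main obstacle).} Here $I$ need not be cyclic: it is a $3$-group extended by a cyclic prime-to-$3$ quotient. I will run a GAP search over all subgroup pairs $I\trianglelefteq D\leq\PSL_2(\F_{13})$ with the following properties:
\begin{itemize}[label={}]
\item $D/I$ is cyclic;
\item $I$ has a normal Sylow $3$-subgroup with cyclic quotient;
\item the orbit sizes reproduce the factorization of $3$ in $K_{13}$.
\end{itemize}
To prune further, I will also require the different exponent $22$ to be compatible with the higher ramification filtration, using the standard Artin-conductor formula on $\Omega$. This search only produces a superset of the arithmetically possible pairs. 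The point to verify is that every surviving pair gives the same $(e(3),f(3))$, which I expect to be $(6,1)$ or $(3,2)$; that common value is the one entered in Table~\ref{tab:l13-local}.
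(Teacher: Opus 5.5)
Your overall route is the paper's: \eqref{eq:l13-field-discriminant} together with Lemma~\ref{lem:no-hidden-ramification} gives the ramified set, and Lemma~\ref{lem:orbit-dictionary} read backwards, with an enumeration of pairs $I\trianglelefteq D\leq\PSL_2(\F_{13})$ (a superset at the wild prime $3$), gives the table. However, two concrete claims in your proposal are false.

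First, your cycle types are wrong. Since $13\equiv1\pmod4$, the involutions of $\PSL_2(\F_{13})$ lie in the split torus, which is cyclic of order $6$. Every nontrivial element of that torus fixes exactly two points of $\PP^1(\F_{13})$. So involutions have type $1^2 2^6$ (index $6$), not $2^7$. Elements of order $6$ have type $1^2 6^2$ (index $10$), not $2\cdot 6^2$. These are the types recorded for $2A$ and $6A$ in Appendix~\ref{app:f13-construction}. There is therefore no inconsistency. The exponent $6$ at a tame prime says exactly that $I$ is generated by an involution, so $e(q)=2$ for $q=13,5119,p_1,p_2$. Your fallback enumeration would still produce the right answer, but the asserted contradiction must go.

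Second, your predicted $(e(3),f(3))\in\{(6,1),(3,2)\}$ is impossible and contradicts the table's $(6,2)$. Since $e_i\mid e(q)$ and $f_i\mid f(q)$ for every prime of $K_{13}$ above $q$, the type $(2,1),(3,2),(6,1)$ already forces $6\mid e(3)$ and $2\mid f(3)$. You can in fact finish by hand:
\begin{itemize}
\item At $q=3$, the wild inertia group is the Sylow subgroup $C_3$ of $I$ and is normal in $D$. Its normalizer in $\PSL_2(\F_{13})$ is dihedral of order $12$. Since $12\mid|D|$, one gets $D\cong D_{12}$ and $|I|=6$. The $I$-orbit sizes $2,3,3,6$ exclude $I\cong C_6$, whose orbits have sizes $1,1,6,6$. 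Hence $I\cong S_3$ and $(e(3),f(3))=(6,2)$.
\item At the tame primes, $D$ normalizes, hence centralizes, the involution generating $I$. Thus $D\leq C_G(I)\cong D_{12}$ with $D/I$ cyclic. Because $D_{12}/Z(D_{12})\cong S_3$ is not cyclic, $|D|\in\{2,4,6\}$.
\item Then $3\mid f(13)$ forces $D\cong C_6$. Likewise $2\mid f(q)$ forces $D\cong C_2\times C_2$ for $q=5119,p_1,p_2$, since $D_{12}$ has no element of order $4$.
\end{itemize}
With these corrections your argument matches the paper's. Once the factorization types are computed, it even makes the computer enumeration unnecessary.
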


\begin{table}[ht]
\centering
\begin{tabular}{@{}clcc@{}}
\toprule
$q$ & type of $q$ in $K_{13}$ & $(I,D)$ & $(e(q),f(q))$\\
\midrule
$2$ & $(1,2),(6,1)^2$ & --- & not needed\\
$3$ & $(2,1),(3,2),(6,1)$ & $(S_3,D_{12})$ & $(6,2)$\\
$13$ & $(1,1)^2,(2,3)^2$ & $(C_2,C_6)$ & $(2,3)$\\
$5119,\,p_1,\,p_2$ & $(1,2),(2,1)^2,(2,2)^2$
 & $(C_2,C_2\times C_2)$ & $(2,2)$\\
\bottomrule
\end{tabular}
\caption{Local data for $K_{13}$ and for its Galois closure $L_{13}$.}
\label{tab:l13-local}
\end{table}

\begin{proof}
We construct the maximal order of $K_{13}$ using PARI/GP, certify it with
\texttt{nfcertify}, and also confirm the discriminant \eqref{eq:l13-field-discriminant}. Factoring the six ramified primes in this maximal order gives the second column of Table~\ref{tab:l13-local}.

We now apply Lemma~\ref{lem:orbit-dictionary} with $G=\PSL_2(\F_{13})$ acting on fourteen points, and enumerate the pairs $I\trianglelefteq D\leq G$ which can realize each factorization type. At $q=3$ the enumeration retains a superset of the possible wild pairs. For each odd prime $q$ in Table \ref{tab:l13-local}, these enumerated sets contain only one conjugacy class of pairs $(I,D)$. In every case, we obtain the invariants appearing in Table \ref{tab:l13-local}.

Every prime ramified in $K_{13}$ ramifies in $L_{13}$, and by
Lemma~\ref{lem:no-hidden-ramification} there is no further ramification, proving the assertion.

\end{proof}

Only the odd primes with $e(q)$ even are considered for Theorem~\ref{thm:boege}, and
those are $3,13,5119,p_1$ and $p_2$.  For them,
\[
\begin{array}{c|ccccc}
 q & 3&13&5119&p_1&p_2\\
 \hline
 f(q) & 2&3&2&2&2\\
 q\bmod4 & 3&1&3&3&3
\end{array}
\]
so $f(q)$ is odd exactly when $q\equiv1\pmod 4$.  Lemma~\ref{lem:l13-local}
and Proposition~\ref{prop:l13-projective} together prove
Theorem~\ref{thm:stems} for $\ell=13$.

\section{The case \texorpdfstring{$\ell=19$}{l=19}}\label{sec:l19}

For $\ell=19$, we include the construction of the polynomial here, as it involves some Shimura curve results. We start with Yang's \cite{YangSchwarzian} cover, which comes from Hecke correspondences,  with geometric monodromy $\PGL_2(\F_{19})$. After a quadratic base change one obtains a cover with monodromy $\PSL_2(\F_{19})$. We transform this pullback cover into a form that allows better computations and obtain the polynomial $f_{19}$ by a precise search.

\subsection{Yang's Hecke cover}\label{subsec:l19-yang}
Let $\mathcal B=(-1,3)_\Q$ be the indefinite quaternion algebra of
discriminant $6$,
so that
\[
 I^2=-1,\qquad J^2=3,\qquad IJ=-JI ,
\]
and let
\[
 \mathcal O=\Z+\Z I+\Z J+\Z\tfrac{1+I+J+IJ}{2}
\]
be a maximal order.  The full Atkin--Lehner quotient $X_6^*(1)$ of the Shimura curve $X_6(1)$ has
genus $0$.  Yang~\cite{YangSchwarzian} fixes a Hauptmodul $T$ on $X_6^*(1)$ by requiring the
elliptic points of orders $6$, $2$ and $4$ to lie at $0$, $-540$ and
$\infty$. We use Yang's modular equation algorithm~\cite{YangModularEquations} to produce a level $19$ modular equation associated with this Hauptmodul. Regarding it as a polynomial in $z$ over $\Q(T)$ and dividing by its leading coefficient, we obtain a monic polynomial $P_0(T,z)\in\Q(T)[z]$ of degree $20$ in $z$. 
 More precisely, the level-$19$ Hecke double coset is the disjoint union of twenty right cosets. Let $\mathcal C_{19}$ denote the set of these cosets. On a simply connected open
set of the $T$-line, each $c\in\mathcal C_{19}$ determines an analytic branch $z_c$. We have 
\begin{equation}\label{eq:yang-defining-product}
 P_0(T,z)=\prod_{c\in\mathcal C_{19}}(z-z_c)\in\Q(T)[z].
\end{equation}
Thus, for a generic point $x\in X_6^*(1)$, the roots of
  $P_0(T(x),z)$ are the Hauptmodul values of the twenty points related to $x$ by the level-$19$ Hecke correspondence. Clearing denominators and removing the common content gives the primitive
integral model $P(T,z)\in\Z[T,z]$ of bidegree $(6,20)$. We use this integral model $P$ for the rest of the paper. Its exact coefficients can be found in the accompanying files. The following lemma helps us identify the monodromy of $P$.

\begin{lemma}\label{lem:l19-hecke-cosets}
Choose a splitting $\mathcal O\otimes\Z_{19}\simeq M_2(\Z_{19})$. It
gives natural equivariant bijections
\[
 \mathcal C_{19}\ \simeq\
 \PGL_2(\F_{19})/B(\F_{19})\ \simeq\ \PP^1(\F_{19}).
\] 
Under these bijections, analytic continuation along a loop around an elliptic
point permutes the roots $z_c$ through the image in $\PGL_2(\F_{19})$ of a
generator of the corresponding elliptic stabilizer.
\end{lemma}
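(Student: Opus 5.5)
We use the standard adelic description of Hecke correspondences on Shimura curves. Let $\Gamma=\mathcal O^1$, or more precisely the normalizer group $\Gamma^*$ defining $X_6^*(1)$, acting on the upper half plane $\mathfrak H$ through a fixed embedding $\mathcal B\otimes\mathbb R\simeq M_2(\mathbb R)$. Fix $\alpha\in\mathcal O$ of reduced norm $19$. The level-$19$ Hecke double coset is $\Gamma\alpha\Gamma=\bigsqcup_{c}\Gamma\alpha_c$. Each coset gives a local branch $z_c(\tau)=T(\alpha_c\tau)$, and \eqref{eq:yang-defining-product} is the product over these branches.

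\textbf{Step 1: the cosets as points of $\PP^1(\F_{19})$.} Since $19\nmid 6$, the order $\mathcal O\otimes\Z_{19}$ is isomorphic to $M_2(\Z_{19})$; fix such a splitting. By strong approximation, $\Gamma^1=\mathcal O^1$ is dense in $\SL_2(\Z_{19})$, so reduction mod $19$ maps $\Gamma^1$ onto $\SL_2(\F_{19})$. Under the splitting, the right cosets $\Gamma\alpha_c$ correspond to the elements of norm $19$ in $M_2(\Z_{19})$ modulo $\GL_2(\Z_{19})$, or equivalently to index-$19$ sublattices of $\Z_{19}^2$. These are exactly the lines in $\F_{19}^2$, giving $20$ cosets. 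Concretely, $\Gamma\alpha\Gamma/\Gamma\simeq \Gamma/(\Gamma\cap\alpha^{-1}\Gamma\alpha)$, and $\Gamma\cap\alpha^{-1}\Gamma\alpha$ is the Eichler order of level $19$, whose image mod $19$ is a Borel subgroup. This gives the bijection $\mathcal C_{19}\simeq \SL_2(\F_{19})/B\simeq\PP^1(\F_{19})$.

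For the Atkin--Lehner elements of $\Gamma^*$, of reduced norm $2$, $3$ and $6$, we use that these norms are units in $\Z_{19}$. Such elements therefore lie in $\GL_2(\Z_{19})$ after the splitting, and they normalize the local picture. Their reductions lie in $\GL_2(\F_{19})$, and passing to the quotient by scalars gives the $\PGL_2(\F_{19})$ equivariance. One checks that the action is well defined and compatible with the double coset decomposition, which uses $\alpha\Gamma^*\alpha^{-1}\cap\Gamma^*$ in the same way.

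\textbf{Step 2: monodromy.} Take a loop on the $T$-line around an elliptic point $T(\tau_0)$, where $\tau_0$ has stabilizer generated by $\gamma_0\in\Gamma^*$. This loop lifts to a path in $\mathfrak H$ from $\tau$ to $\gamma_0\tau$. Analytic continuation sends $z_c(\tau)=T(\alpha_c\tau)$ to $T(\alpha_c\gamma_0\tau)$. We have $\alpha_c\gamma_0=\delta\,\alpha_{c'}$ for some $\delta\in\Gamma^*$ and some coset $c'=c\cdot\gamma_0$. Since $T$ is $\Gamma^*$-invariant, this equals $z_{c'}(\tau)$. So continuation acts on $\mathcal C_{19}$ by right multiplication by $\gamma_0$, which under Step 1 is the action of the image of $\gamma_0$ in $\PGL_2(\F_{19})$ on $\PP^1(\F_{19})$ (after matching left and right actions by inversion).

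\textbf{Main obstacle.} The delicate point is the Atkin--Lehner part. The elliptic points of orders $2$ and $4$ on $X_6^*(1)$ come from elements of norm $2$, $3$ or $6$, not from $\mathcal O^1$. We must verify that right multiplication by these elements permutes the cosets of $\Gamma^*\alpha\Gamma^*$ compatibly, and that scalars act trivially. That makes the correct target group $\PGL_2$ rather than $\SL_2$. We would handle this by working in $\mathcal B^\times\cap(\mathcal O\otimes\Z_{19})^\times$ modulo $\Q^\times$ throughout.
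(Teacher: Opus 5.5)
Your proposal is correct and follows essentially the paper's route. The Hecke cosets are identified with $\PGL_2(\F_{19})/B(\F_{19})$ by reduction modulo $19$ (the level-$19$ Eichler order going to the Borel), right and left cosets are matched by inversion, and the monodromy is right multiplication by the elliptic generator.

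The difference is only one of presentation. The paper works directly with the local group $K=\mathcal O_{19}^\times/\Z_{19}^\times$, which contains the Atkin--Lehner elements automatically because their norms $2,3,6$ are $19$-adic units. You reach the same identification globally, through strong approximation for $\mathcal O^1$ and the explicit continuation $T(\alpha_c\tau)\mapsto T(\alpha_c\gamma_0\tau)$; this spells out steps the paper only asserts.
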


\begin{proof}

Put $\mathcal O_{19}=\mathcal O\otimes\Z_{19}$, and let
  $\mathcal E_{19}\subset\mathcal O_{19}$ be the local Eichler order of level
  $19$. Define
  \[
   K=\mathcal O_{19}^{\times}/\Z_{19}^{\times},
   \qquad
   K_0(19)=\mathcal E_{19}^{\times}/\Z_{19}^{\times}.
  \]  
Reduction modulo $19$ maps $K$ onto
$\PGL_2(\F_{19})$, and $K_0(19)$ is the inverse image of the upper triangular
Borel subgroup $B(\F_{19})$.  The kernel of reduction is contained in
$K_0(19)$, so reduction induces a bijection
\[
 K/K_0(19)\ \simeq\
 \PGL_2(\F_{19})/B(\F_{19}).
\]
The cosets $\mathcal C_{19}$ indexing Yang's double-coset product are right
cosets, and hence form the set $K_0(19)\backslash K$.  We identify this set
with the left-coset space above by inversion:
\[
 K_0(19)g\longmapsto g^{-1}K_0(19).
\]
Under this identification, right multiplication by $k$ corresponds to left
multiplication by $k^{-1}$.  We use this left-coset convention below.
Let $L_0$ be the line spanned by $(1,0)$ in $\F_{19}^2$.  Its stabilizer in
$\PGL_2(\F_{19})$ is $B(\F_{19})$, so
\[
 gB(\F_{19})\longmapsto gL_0
\]
is a well-defined equivariant bijection from the displayed coset space to
the set of lines in $\F_{19}^2$, namely $\PP^1(\F_{19})$.  In particular,
the three sets in the lemma each have twenty elements.

Formula~\eqref{eq:yang-defining-product} attaches the factor $z-z_c$ to the
coset $c$.  A small positively oriented loop around an elliptic point acts on
the Hecke correspondence by permuting its cosets.  This action is induced by
a generator of the corresponding elliptic stabilizer, so analytic
continuation carries the factor $z-z_c$ to the factor indexed by the image of
$c$.
Under the chosen splitting and the inversion convention above, reduction
modulo $19$ turns this coset action into the projective action stated in the
lemma.
\end{proof}

To use the lemma, we need the splitting explicitly. Using Hensel's lemma, we get a
$b\in\Z_{19}$ with $b^2=-13$ and $b\equiv5\pmod {19}$. Take
\[
 I\longmapsto\begin{pmatrix}0&-1\\1&0\end{pmatrix},
 \qquad
 J\longmapsto\begin{pmatrix}4&b\\b&-4\end{pmatrix}
 \]
in $M_2(\Z_{19})$. These matrices satisfy the quaternion relations given at the beginning of the section. Modulo $19$ the second matrix is
$\left(\begin{smallmatrix}4&5\\5&-4\end{smallmatrix}\right)$, and the images
of the four displayed generators of $\mathcal O$ span $M_2(\F_{19})$.
Using Nakayama's lemma, we see that the induced map
$\mathcal O\otimes\Z_{19}\to M_2(\Z_{19})$ is bijective. Let $\zeta_2$, $\zeta_4$, and $\zeta_6$ denote generators of the elliptic stabilizers of orders $2$, $4$, and $6$ described in \cite{YangSchwarzian}*{\S4}. Substituting the displayed matrices for $I$ and $J$ gives, up to scalars,
\[
 \zeta_2=I(3+J),\qquad\zeta_4^{-1}=1-I,\qquad\zeta_6=(1-I)(3+J).
\]
The inverse at order $4$ gives the required product-one relation, since
\[
 \zeta_2\zeta_4^{-1}\zeta_6=12,
\]
and the scalar $12$ is trivial in $\PGL_2$. Reducing these elements modulo $19$ gives permutations of $\PP^1(\F_{19})$ with cycle types
\begin{equation}\label{eq:l19-folded-passport}
 2^{10},\qquad 4^5,\qquad 1^2 6^3.
\end{equation}
The group these three permutations generate has order
$19(19^2-1)=6840$.

\begin{proposition}\label{prop:l19-yang-pgl}
The geometric monodromy group of $P(T,z)$, in its natural degree $20$ action, is
$\PGL_2(\F_{19})$.
\end{proposition}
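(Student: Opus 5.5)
The plan is to identify the geometric monodromy group of $P(T,z)$ over $\overline{\Q}(T)$ with the group generated by the local monodromies at its branch points, and then to read those local monodromies off from Lemma~\ref{lem:l19-hecke-cosets}.

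First we locate the branch locus of the degree $20$ cover $z\mapsto T$ given by $P$. The Hecke correspondence $T_{19}$ on $X_6^*(1)$ is unramified over the complement of the elliptic points. This is because the uniformizing group acts freely on the upper half plane away from the elliptic fixed points, and $X_6^*(1)$ has no cusps since $\mathcal B$ is a division algebra. Consequently the analytic branches $z_c$ of \eqref{eq:yang-defining-product} continue without monodromy along any path avoiding $T=0,-540,\infty$. We will double-check this against the exact model $P$. We do so by computing the discriminant of $P(T,z)$ with respect to $z$ and checking that its factors in $T$ vanish only at $0$ and $-540$. Any other factor should correspond only to collisions of values $z_c=z_{c'}$ that are not branching. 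Equivalently, we check that the specialisations at generic rational $T$ away from these points stay squarefree, or that any such factor is a square coming from nodes of the plane model. Either way, the monodromy is generated by the three local monodromies $\sigma_0,\sigma_{-540},\sigma_\infty$, and these satisfy a product-one relation.

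Second, Lemma~\ref{lem:l19-hecke-cosets} identifies the roots with $\PP^1(\F_{19})$ equivariantly. Under this identification, the local monodromies are the images in $\PGL_2(\F_{19})$ of the elliptic generators. Since only conjugacy classes and the generated group matter, we may use the explicit splitting and the elements $\zeta_6,\zeta_2,\zeta_4^{-1}$ computed above. These are consistent with the relation $\zeta_2\zeta_4^{-1}\zeta_6=12$, which is trivial in $\PGL_2$. Hence the monodromy group equals the subgroup $H\le\PGL_2(\F_{19})$ generated by the reductions of $\zeta_2,\zeta_4^{-1},\zeta_6$. This $H$ lies in $\PGL_2(\F_{19})$ acting on $\PP^1(\F_{19})$, and the computation recorded above gives $|H|=6840=|\PGL_2(\F_{19})|$, so $H=\PGL_2(\F_{19})$. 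We can also see this without the order computation. The element of order $4$ has cycle type $4^5$ on $20$ points, so it is an odd permutation. Its square determinant is a nonsquare, so it lies outside $\PSL_2$. The cycle types $2^{10},4^5,1^26^3$ force transitivity together with generation of a non-solvable subgroup, and the maximal subgroup list of $\PGL_2(\F_{19})$ (Borel, dihedral, $S_4$) excludes every proper subgroup that contains elements of these types acting transitively.

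The main obstacle is making the link between the abstract Hecke branches and the integral model $P$ rigorous. This means justifying that the loops around $0,-540,\infty$ act exactly by the stated elliptic elements and not by conjugates with some other inversion convention, and that no extra branch values appear in $P$. We would handle the first point by noting that conjugation and inversion do not change the generated group, so the conclusion is insensitive to these choices. For the second point we would rely on the discriminant factorisation computed in Sage. As an independent confirmation, we would compare the cycle types of $P$ at the three branch points, obtained from Puiseux expansions or from factoring $P$ over $\overline{\Q}((T))$, with \eqref{eq:l19-folded-passport}.
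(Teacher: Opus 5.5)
Your proposal is correct and follows the paper's proof. Lemma~\ref{lem:l19-hecke-cosets} identifies the local monodromies with the reductions of $\zeta_2,\zeta_4^{-1},\zeta_6$ acting on $\PP^1(\F_{19})$, and the computed order $6840=|\PGL_2(\F_{19})|$ of the group they generate forces equality. Your additional points only make explicit what the paper leaves implicit. These are that the branch locus is exactly $\{0,-540,\infty\}$ because $X_6^*(1)$ has no cusps, and the maximal-subgroup cross-check, which is cleanest via element orders: apart from $\PSL_2(\F_{19})$, no maximal subgroup of $\PGL_2(\F_{19})$ contains elements of both orders $4$ and $6$.
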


\begin{proof}
By Lemma~\ref{lem:l19-hecke-cosets} the monodromy of the three orbifold
generators is the explicit matrix action just described.  Those three
permutations generate a group of order $6840=|\PGL_2(\F_{19})|$ contained in
the image of $\PGL_2(\F_{19})$, so the two coincide.
\end{proof}

\subsection{The sign pullback and a harmonic model}\label{subsec:l19-sign}
The determinants of the three elliptic stabilizers have square classes
$(+,-,-)$, which are also the signs of the three permutations in
\eqref{eq:l19-folded-passport}.  Since the matrices generate
$\PGL_2(\F_{19})$, the kernel of the sign character on the monodromy is
exactly $\PSL_2(\F_{19})$.  The stored equation of $P(T,z)$ satisfies 
\[
 \Disc_z P(T,z)=(-95T)\,\Delta_0(T)^2,
 \qquad\Delta_0\in\Z[T],\quad\deg\Delta_0=106 ,
\]
for some $\Delta_0$. Since adjoining the square root of a discriminant kills the sign character, the base change
\begin{equation}\label{eq:l19-sign-basechange}
 T=-95s^2
\end{equation}
gives the sign double cover, and the normalization of the pullback has
geometric monodromy $\PSL_2(\F_{19})$ which is connected since $\PSL_2(\F_{19})$ acts transitively on $\PP^1(\F_{19})$. These arguments prove the following:

\begin{lemma}\label{lem:l19-sign-pullback}
Let $C\to\PP^1_s$ be the normalization of the pullback of $P(T,z)$ along
$T=-95s^2$.  Its geometric monodromy group is $\PSL_2(\F_{19})$ in the
natural action on $\PP^1(\F_{19})$.
\end{lemma}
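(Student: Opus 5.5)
We compare the geometric monodromy group of the pulled-back cover with the monodromy of the original cover $P(T,z)\to\PP^1_T$. By Proposition~\ref{prop:l19-yang-pgl}, the latter is $\PGL_2(\F_{19})$ acting on $\PP^1(\F_{19})$. Fix a base point $s_0$ with $T_0=-95s_0^2$ away from the branch locus. Let $\pi\colon\PP^1_s\to\PP^1_T$ be the double cover $s\mapsto -95s^2$; it is branched only over $T=0$ and $T=\infty$. The topological fundamental group of $\PP^1_s$ minus the preimage of the branch locus maps onto an index-$2$ subgroup $H$ of $\pi_1(\PP^1_T\setminus\text{branch},T_0)$. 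The monodromy of the pullback cover is then the image of $H$ in $\PGL_2(\F_{19})\subset S_{20}$. Normalization of the pullback does not change the monodromy action on the generic fiber, so $C\to\PP^1_s$ has monodromy group $\rho(H)$, where $\rho$ is the original monodromy representation.

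The key step is to identify $H$ with the kernel of $\operatorname{sgn}\circ\rho$. The double cover $\pi$ corresponds to the quadratic extension $\overline{\Q}(T)(\sqrt{-95T})$ of the function field $\overline{\Q}(T)$. The displayed identity $\Disc_zP(T,z)=(-95T)\Delta_0(T)^2$ shows that $\sqrt{\Disc_z P}$ generates the same extension. Geometric monodromy $\sigma$ acts on $\sqrt{\Disc_zP}=\prod_{c<c'}(z_c-z_{c'})$ by $\operatorname{sgn}(\rho(\sigma))$. Hence a loop lifts to a closed loop on $\PP^1_s$ exactly when its monodromy is even, which gives $H=\ker(\operatorname{sgn}\circ\rho)$. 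Equivalently, one can check this on generators: the loops around $0$ and $\infty$ (where $\pi$ ramifies) are exactly those not in $H$. With signs $(+,-,-)$ at the elliptic points $-540$, $\infty$, $0$, and trivial monodromy elsewhere, the two index-$2$ subgroups agree. This is consistent with the determinant square classes quoted above.

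It follows that $\rho(H)=\rho(\pi_1)\cap A_{20}=\PGL_2(\F_{19})\cap A_{20}$. It remains to show that this is $\PSL_2(\F_{19})$ in the natural action. An element of $\PGL_2(\F_{19})$ acts on $\PP^1(\F_{19})$ as an even permutation if and only if its determinant is a square. One can see this directly: a generator of $\PGL_2/\PSL_2$, for instance $\operatorname{diag}(g,1)$ with $g$ a primitive root, acts as $1^2\,18^1$, which is odd. Since $\PSL_2(\F_{19})$ is perfect it lies in $A_{20}$, and by index considerations $\PGL_2(\F_{19})\cap A_{20}=\PSL_2(\F_{19})$. Finally, $\PSL_2(\F_{19})$ is transitive on $\PP^1(\F_{19})$, so $C$ is irreducible.

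The main obstacle is the rigorous identification of the base-change field with the discriminant field. One must check that $-95T$ is not a square in $\overline{\Q}(T)$, so that the extension is a genuine quadratic extension. One must also check that the constant $-95$ is harmless over $\overline{\Q}$ and that the signs at the branch points match. The ingredients are the stored factorization of $\Disc_zP$, which is a computational certificate from the accompanying files, and the cycle types \eqref{eq:l19-folded-passport}.
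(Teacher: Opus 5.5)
Your proposal is correct and follows essentially the paper's argument: the identity $\Disc_zP=(-95T)\Delta_0^2$ shows that the base change $T=-95s^2$ is the sign double cover, and the kernel of the sign character on $\PGL_2(\F_{19})$ acting on $\PP^1(\F_{19})$ is $\PSL_2(\F_{19})$. Two of your steps are things the paper states only briefly via the determinant square classes: your check that $\operatorname{diag}(g,1)$ acts with cycle type $1^2\,18$, and your comparison of the signs $(+,-,-)$ of the branch cycles with the ramification of $s\mapsto -95s^2$ over $0$ and $\infty$ (which by itself identifies the index-$2$ subgroup without the discriminant certificate).
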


 For the cover $C\to\PP^1_s$ of Lemma~\ref{lem:l19-sign-pullback}, the branch
  values $T=0,-540,\infty$ pull back under $T=-95s^2$ to
  \[
   s=0,\qquad s=\pm\sqrt{108/19},\qquad s=\infty.
  \]
The equation $P$, reconstructed from Yang's algorithm, is too large to use directly in the deformation calculation (by "deform" we mean varying the point on the Hurwitz curve to obtain a new cover preserving the cycle types), so we introduce a smaller exact model and identify its monodromy below. In a suitable ordering, the cross-ratio of the branch values of this pullback is
$-1$.  We call such a configuration \emph{harmonic}. The replacement model has as its source the
elliptic curve
\begin{equation}\label{eq:l19-harmonic-elliptic}
 E:\quad v^2=9747u^3-18945u^2+11385u-2187
\end{equation}
and covering map
\begin{equation}\label{eq:l19-harmonic-map}
 \beta=95\cdot19\rho^2\,u\,\frac{g^6}{h^4}\ \colon\ E\longrightarrow\PP^1_T,
 \qquad
 \rho=\frac{11250000}{116490258898219},
\end{equation}
Here $g,h\in\Q(E)$. Their exact coefficients are included in the supplementary computational files.

We have found this replacement model as follows. The deck transformation of the
quadratic base change induces the involution
\[
 \iota\colon C\longrightarrow C,\qquad (s,z)\longmapsto(-s,z)
\]
on the normalized sign pullback.  Over finite fields, the quotient by
$\iota$ has genus one, and its reductions identify isogeny class $114\mathrm a$
as a candidate for $E$.  The required ramification partitions translate into
divisor equations for the covering function on this elliptic curve. We first find a solution to these equations over $\F_7$. We then lift this solution using Hensel's lemma and recover rational coefficients.  Direct substitution verifies that these coefficients satisfy the defining identity. The calculation over $\F_7$ is only how we found the
  model.  The proof uses the exact identity and the comparison below.

The ramification partitions above the three branch values of $\beta$ are
\[
 \beta^{-1}(0):1^2 6^3,
 \qquad
 \beta^{-1}(-540):2^{10},
 \qquad
 \beta^{-1}(\infty):4^5 .
\]
Three matching fiber types do not make our replacement cover \eqref{eq:l19-harmonic-map} the same
cover as Yang's nor determine its monodromy. By comparing the two
reductions modulo $23$, the next lemma proves that the harmonic model
nevertheless has Yang's geometric monodromy group.

\begin{lemma}\label{lem:l19-harmonic-is-yang}
Let $\pi_Y\colon X_Y\to\PP^1_T$ be Yang's degree-$20$ cover obtained by
normalizing $P(T,z)=0$.  The covers $\pi_Y$ and
$\beta\colon E\to\PP^1_T$ have good tame reduction at $23$. Their
special fibers are isomorphic as degree $20$ covers of
$\PP^1_{\F_{23}}$.  Consequently, the harmonic cover $\beta$ has geometric
monodromy $\PGL_2(\F_{19})$ in its natural degree $20$ action.
\end{lemma}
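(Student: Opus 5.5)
The plan is to use Grothendieck's theory of tame specialization of the tame fundamental group. A tame cover of $\PP^1$ with good reduction has geometric monodromy equal to that of its special fiber. Two covers with isomorphic special fibers therefore share their geometric monodromy, and the geometric monodromy of $\pi_Y$ is $\PGL_2(\F_{19})$ by Proposition~\ref{prop:l19-yang-pgl}.

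First I would establish good tame reduction at $23$ for each cover separately. The branch locus of both covers is $\{0,-540,\infty\}\subset\PP^1_T$. Since $540=2^2\cdot3^3\cdot5$, these three points stay distinct modulo $23$, so the branch divisor is étale over $\Z_{(23)}$. All ramification indices lie in $\{1,2,4,6\}$, which are prime to $23$, and $23\nmid|\PGL_2(\F_{19})|=6840$, so tameness is automatic. For $\beta$, the curve $E$ has conductor $114=2\cdot3\cdot19$ in the isogeny class $114\mathrm a$, so it has good reduction at $23$. I would then check from the exact coefficients of $g,h$ and $\rho$ that $\beta$ is defined over $\Z_{(23)}$. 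I would also check that its reduction $\bar\beta$ still has degree $20$, with fiber types $1^26^3$, $2^{10}$, $4^5$ over $\bar 0,\overline{-540},\infty$. Riemann--Hurwitz then shows that $\bar\beta$ has no other branching.

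For $\pi_Y$, I would reduce $P(T,z)$ modulo $23$ and confirm three things. The reduction keeps $z$-degree $20$ and is absolutely irreducible. Its discriminant $-95T\,\Delta_0(T)^2$ vanishes modulo $23$ only at $T=0$, at the roots of $\bar\Delta_0$, and at the poles. The roots of $\bar\Delta_0$ must be checked to be singular points of the plane model lying over $0,-540,\infty$, or inseparable-free nodes, rather than new branch points. A cleaner route is to compute the genus of the normalization of $\bar P=0$ in Magma. That genus should equal the Riemann--Hurwitz genus $1$ of the generic fiber with the same passport. The normalization then has the same ramification data, so the cover $X_Y\to\PP^1_T$ extends to a tame cover of $\PP^1_{\Z_{(23)}}$ branched only along the three sections.

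Next comes the comparison of special fibers, which I expect to be the main obstacle. The function fields $\F_{23}(T)[z]/\bar P$ and $\F_{23}(\bar E)$ must be shown isomorphic over $\F_{23}(T)$. My first attempt is direct: factor $\bar P(\bar\beta,z)$ over $\F_{23}(\bar E)$ and look for a root $z\in\F_{23}(\bar E)$ of degree $20$ as a function, hence a generator. That root gives a $\PP^1_T$-isomorphism. If this is too costly, the fallback is to compare the branch-cycle data. The two special fibers are tame covers with the same three branch points over $\overline{\F}_{23}$. Their monodromy tuples live in the prime-to-$23$ fundamental group, which lifts to characteristic zero, so they are determined by Nielsen classes. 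I would then count the Nielsen class of $(2^{10},4^5,1^26^3)$ in $S_{20}$ with product one and generating a transitive group. If this count up to conjugacy is $1$, the special fibers coincide. One can also compare point counts of both fibers above many $T\in\F_{23^k}$ as a check.

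Finally, with the special fibers isomorphic, tame specialization gives
\[
\mathrm{Mon}(\beta)\cong\mathrm{Mon}(\bar\beta)=\mathrm{Mon}(\bar\pi_Y)\cong\mathrm{Mon}(\pi_Y)=\PGL_2(\F_{19}).
\]
These identifications are compatible with the degree-$20$ actions on the fibers, and the last equality is Proposition~\ref{prop:l19-yang-pgl}.
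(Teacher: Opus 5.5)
Your proposal is correct and follows essentially the paper's route: check good tame reduction at $23$, identify the special fibers by exhibiting a root of $\bar P(\bar\beta,z)$ in $\F_{23}(\bar E)$, and transfer the monodromy by SGA1 XIII, using that every degree-$20$ monodromy group has order prime to $23$. The paper finds the root by taking the norm of $\bar P(\beta,z)$ down to $k(u)$, which gives factors of degrees $2$ and $38$, and a gcd that is linear over $k(E)$, then concludes by the degree-$20$ count; your Nielsen-count fallback should not be relied on, since the paper stresses that the passport $(2^{10},4^5,1^26^3)$ alone determines neither the cover nor its monodromy.
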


\begin{proof}
Let $X_Y\to\PP^1_T$ denote Yang's degree-$20$ cover, obtained by normalizing
$P(T,z)=0$, and let $\beta\colon E\to\PP^1_T$ denote the harmonic cover given above.
We compare these two covers modulo $23$.  Put $k=\F_{23}$.  The cubic in
\eqref{eq:l19-harmonic-elliptic} remains squarefree over $k$ and the map
$\beta$ retains degree $20$. The displayed ramification partitions and
the three branch values all remain distinct.  Since its ramification indices
are $2$, $4$, and $6$, all prime to $23$, the harmonic cover has good tame
reduction.

The reduction $\overline P(T,z)$ is irreducible of degree $20$ over $k(T)$.
Substituting $T=\beta$ and taking the norm
from $k(E)$ to $k(u)$ produces factors of degrees
$2$ and $38$,
and the greatest common divisor of the degree-two factor with
$\overline P(\beta,z)$ is
linear over $k(E)$.  This exhibits an embedding
$k(T)[z]/(\overline P)\hookrightarrow k(E)$ of $k(T)$-algebras. Both $k(T)$ algebras have degree
$20$ over $k(T)$, so the embedding gives an isomorphism of covers.
Thus the normalized reduction of Yang's cover is the same smooth tame cover
as the reduction of the harmonic model.

Now compare characteristic $0$ with characteristic $23$. A degree-$20$
monodromy group has order dividing $20!$, and $23\nmid20!$, so the monodromy
is prime to $23$ and is therefore unchanged by specialization
\cite{SGA1}*{Expos\'e~XIII, Cor.~2.9 and \S2.10}. Both characteristic zero
covers thus have the monodromy of the common special fiber, which is
$\PGL_2(\F_{19})$ by Proposition~\ref{prop:l19-yang-pgl}.
\end{proof}

We have seen that Yang's cover and $\beta$ have the same monodromy, but we also need to express the quadratic base change on $E$ to eliminate the sign character. Form the fiber product
\[
 E\times_{\PP^1_T}\PP^1_s,
\]
using $\beta\colon E\to\PP^1_T$ and $T=-95s^2$, and let $C_{\mathrm h}$
be its normalization. The induced map
$\pi_{\mathrm h}\colon C_{\mathrm h}\to\PP^1_s$ is the harmonic sign
pullback of the cover $\beta$. It has degree $20$, and its function field is
\[
 \Q(C_{\mathrm h})=\Q(E)(s),\qquad s^2=-\beta/95.
\]
Equation~\eqref{eq:l19-harmonic-map} gives the identity
\[
 -\frac{\beta}{95}
   =\Bigl(\rho\frac{g^3}{h^2}\Bigr)^2(-19u).
\]
Consequently the element
\[
 W=\frac{s h^2}{\rho g^3}\in\Q(C_{\mathrm h})
\]
satisfies
\[
 W^2=-19u,
 \qquad
 s h^2=\rho Wg^3.
\]
Conversely, the second identity recovers $s$ from $W$, so we have an equality of fields
$\Q(C_{\mathrm h})=\Q(E)(W)$. As a result, $W$ describes the specified sign pullback, rather than a different quadratic extension of $\Q(E)$.

We next eliminate the elliptic coordinate $v$.  Write
$E\colon v^2=f_3(u)$ and decompose the two explicit functions $g$ and $h$ of \ref{eq:l19-harmonic-map} as
\[
 h^2=h_0(u)+v h_1(u),
 \qquad
 g^3=g_0(u)+v g_1(u),
\]
with $h_i,g_i\in\Q(u)$.  After substituting $u=-W^2/19$, the relation
$s h^2-\rho Wg^3=0$ becomes
\[
 \Phi_0(s,W)+v\Phi_1(s,W)=0,
\]
where
\[
 \begin{aligned}
 \Phi_0(s,W)
   &=s h_0(-W^2/19)-\rho Wg_0(-W^2/19),\\
 \Phi_1(s,W)
   &=s h_1(-W^2/19)-\rho Wg_1(-W^2/19).
 \end{aligned}
\]
Both functions $\Phi_0$ and $\Phi_1$ are explicit and linear in $s$. We multiply them by a
common denominator to make them polynomials in $s$ and $W$.  The involution of the quadratic extension $\Q(W,v)/\Q(W)$ sends $v$ to $-v$.  Its norm
eliminates $v$:
\[
 \operatorname{Norm}(\Phi_0+v\Phi_1)
   =(\Phi_0+v\Phi_1)(\Phi_0-v\Phi_1)
   =\Phi_0^2-f_3(-W^2/19)\Phi_1^2.
\]
On the dense open set where $\Phi_1\ne0$, the equation $G(s,W)=0$ recovers $v=-\frac{\Phi_0(s,W)}{\Phi_1(s,W)}$. The norm identity then gives $v^2=f_3(-W^2/19)$, so the recovered $v$satisfies the original relation. Hence the primitive factor $G(s,W)$ and
$C_{\mathrm h}$ have the same function field.
  After substitution, we clear the denominators of $g$ and $h$. These denominators vanish at $u=1$, equivalently at $W^2+19=0$, and clearing them introduces the factor $(W^2+19)^2$. This factor is supported entirely on the denominator locus and is therefore absent from the original rational equation. Canceling it preserves the function field of the pullback. Dividing the remaining polynomial by its content and fixing its overall sign
  then gives the primitive integral equation 
\begin{equation}\label{eq:l19-harmonic-eliminant}
 G(s,W)=G_2(W)s^2+G_1(W)s+G_0(W)=0.
\end{equation}
It has degree $2$ in $s$ because the norm multiplies two expressions linear
in $s$, and the exact calculation gives degree $20$ in $W$. The coefficients of each $G_i$ can be found in the accompanying files. Accordingly,
projection to the $W$-line is a double cover, while projection to the
$s$-line is the required degree-$20$ map $\pi_{\mathrm h}$.

The discriminant of \eqref{eq:l19-harmonic-eliminant} as a quadratic in
$s$ is
\[
 \Delta_G(W)=G_1(W)^2-4G_0(W)G_2(W)
            =H_6(W)R_{17}(W)^2,
\]
where $H_6$ is squarefree of degree $6$ and $R_{17}$ has degree $17$. Completing the square in $s$ and setting
\[
 V=\frac{2G_2(W)s+G_1(W)}{R_{17}(W)}
\]
identifies the function field of the curve $G(s,W)=0$ with the function field of
\[
 V^2=H_6(W).
\]
Thus we can remove $R_{17}^2$ in a birational change of coordinates. The zeros contributed by $R_{17}^2$ do not give branch points after normalization.  The six simple roots of $H_6$ are the branch points of the
double cover of the $W$-line and since $\deg H_6=6$ is even, we see that infinity contributes no further branch point. Finally, Riemann--Hurwitz formula gives
\[
 2g(C_{\mathrm h})-2=2(-2)+6=2,
\]
so $C_{\mathrm h}$ has genus $2$.  Since the base change is the sign
pullback, the geometric monodromy of $\pi_{\mathrm h}$ is
$\PSL_2(\F_{19})$. The accompanying computation reconstructs $G(s,W)$ exactly from $g$ and $h$ and verifies the resulting identity coefficient by coefficient.

\subsection{A rational genus-two cover}\label{subsec:l19-deformation}
The harmonic cover is defined over $\Q$ with a special branch configuration, but specializing it does not give a totally real polynomial.  We
therefore move along the family of covers with the same local data until we
reach a point where it does, keeping track of rationality.

Consider the Nielsen class $(2A,2A,2A,3A)$ in $\PSL_2(\F_{19})$, and let
  $\mathcal H^{\mathrm{in}}$ and $\mathcal H^{\mathrm{abs}}$ denote its inner
  and absolute Hurwitz curves. Riemann--Hurwitz applied to the passport of the
  exact degree-$63$ Belyi map shows that $\mathcal H^{\mathrm{abs}}$ has genus
  zero. We normalize a coordinate $\upsilon$ on this curve by sending its
  unique cusps of widths $1$, $3$, and $2$ to $0$, $1$, and $\infty$,
  respectively.

  Since $N_{S_{20}}(\PSL_2(\F_{19}))/\PSL_2(\F_{19})=\PGL_2(\F_{19})/\PSL_2(\F_{19})$ has order $2$, the map
  $\mathcal H^{\mathrm{in}}\to\mathcal H^{\mathrm{abs}}$ has degree $2$.
  The reduced braid action shows that this map is branched precisely above
  $\upsilon=0$ and $\upsilon=4$. Consequently, its function field has the
  form
  \[
   \Q(\mathcal H^{\mathrm{in}})
   =\Q(\upsilon)\bigl(\sqrt{d\,\upsilon(\upsilon-4)}\bigr),
   \qquad d\in\Q^\times/\Q^{\times2}.
  \]

  The fiber above $\upsilon=1$ consists geometrically of two inner cusps, which together form a degree-$2$ closed point over $\Q$. At either cusp, the
  degree-$20$ cover degenerates into two irreducible components. The induced three-point covers on these components have monodromy groups $A_4$ and $S_3$, respectively, and the components meet at a node with inertia group $C_3$.
  Complex conjugation exchanges the two cusps. The branch cycle lemma
  \cite{BaileyFried2002}*{\S3.2.2} therefore identifies the residue field of this degree-$2$ closed point with $\Q(\zeta_3)=\Q(\sqrt{-3})$.
  On the other hand, substituting $\upsilon=1$ gives the residue field $\Q(\sqrt{-3d})$. Hence $d$ is a square. After rescaling the second coordinate to absorb $d$, the inner Hurwitz curve has the affine model
  \[
   \omega^2=\upsilon(\upsilon-4).
  \]
  This conic admits the rational parametrization
\begin{equation}\label{eq:l19-inner-line}
 \upsilon=2+t+t^{-1},\qquad \omega=t-t^{-1} .
\end{equation}
The harmonic point is given by $t=1$.  We move to the point
\begin{equation}\label{eq:l19-inner-target}
 t_*=-\frac{45}{13},
 \qquad
 (\upsilon_*,\omega_*)=
 \Bigl(-\frac{1024}{585},-\frac{1856}{585}\Bigr),
\end{equation}
chosen for two reasons.  First, it lies in an interval of the real inner
line over which all four branch points stay real and complex conjugation acts
trivially on the Nielsen class. Hence the covers above $t_*$ can have totally
real fibers.  Second, it is congruent to the harmonic point modulo $29$,
which is what makes the Hensel lift below possible.

We first express the three finite branch values of the cover in terms of the
  coordinate $t$ on the inner Hurwitz curve.  Let
  \[
   j_{\mathrm H}\colon \mathbb P^1_{\upsilon}\longrightarrow\mathbb P^1_j
  \]
  be the exact degree-$63$ Hurwitz Belyi map.  Choose coprime polynomials
  $p_3,p_c\in\Q[\upsilon]$ such that
  \[
   j_{\mathrm H}(\upsilon)
     =\frac{256}{729}\frac{p_3(\upsilon)}{p_c(\upsilon)}.
  \]
  Thus the zeros of $p_3$ lie above $j=0$, while the zeros of $p_c$ are the
  finite poles of $j_{\mathrm H}$.  Define $p_2\in\Q[\upsilon]$ by
  \[
   j_{\mathrm H}(\upsilon)-1728
     =\frac{256}{729}\frac{p_2(\upsilon)}{p_c(\upsilon)}.
  \]
  Its zeros therefore lie above $j=1728$.  These two expressions are
  equivalent to the exact Belyi identity
  \begin{equation}\label{ll19:belyi-identity}
         4p_3-4p_2=19683p_c.
  \end{equation}
  With this normalization, the relevant numerator factorizations are
  \[
   p_3=A_{20}^3B_3,
   \qquad
   p_2=C_{31}^2(\upsilon-4),
  \]
  where the subscripts record the degrees of the factors. Recall that the inner Hurwitz line has equation $ \omega^2=\upsilon(\upsilon-4)$.
  Define
  \[
   a(\upsilon)
     =-3\upsilon A_{20}(\upsilon)B_3(\upsilon),
   \qquad
   b(\upsilon,\omega)
     =2\upsilon B_3(\upsilon)C_{31}(\upsilon)\omega.
  \]
  A direct calculation using \ref{ll19:belyi-identity} gives
  \[
   \frac{4a(\upsilon)^3}
   {4a(\upsilon)^3+27b(\upsilon,\omega)^2}
     =\frac{j_{\mathrm H}(\upsilon)}{1728}.
  \]
For a separable cubic $Y^3+aY+b$, the double cover
$v^2=Y^3+aY+b$ is branched at the three roots and $\infty$ and has
$j$-invariant $1728\cdot4a^3/(4a^3+27b^2)$. Conversely, this $j$-invariant
determines the projective equivalence class of the unordered four-point
branch configuration. Therefore the equality above shows that the roots of
  \[
   Y^3+a(\upsilon)Y+b(\upsilon,\omega)
  \]
  together with $\infty$ represent the branch configuration parametrized by
  the corresponding point of the Hurwitz curve.

  Using the parametrization \ref{eq:l19-inner-line} we put
  \begin{equation}
         \mathcal A(t)=t^{24}a(\upsilon(t)),
   \qquad
   \mathcal B(t)=t^{36}b(\upsilon(t),\omega(t)).
  \end{equation}
  The powers of $t$ clear the denominators introduced by the parametrization. They are compatible with the change of target coordinate
  $Y\mapsto t^{12}Y$, and therefore do not change the associated four-point
  configuration. We observe by substitution that $\mathcal A,\mathcal B$ are in $\Q[t]$. Thus the three finite branch values at the inner point $t$ are the roots of
  \[
   Y^3+\mathcal A(t)Y+\mathcal B(t).
  \]
  For the chosen target point $t_*=-45/13$, write
  \[
   (A_*,B_*)=(\mathcal A(t_*),\mathcal B(t_*)).
  \]
  We now describe the deformation chart.  For a general pair $(A,B)$, let
  $y_1,y_2,y_3$ be the roots of $Y^3+AY+B$. We seek a degree $20$ map to the $Y$-line whose fibers above $y_1,y_2,y_3$ have cycle type $2^{10}$ and whose fiber above infinity has cycle type $1^2 3^6$.  We represent such a map by
  \begin{equation}\label{eq:l19-target-equation}
   F(Y,z)=N_0(z)+YN_1(z)+Y^2N_2(z)=0,
  \end{equation}
  where projection to the coordinate $Y$ is the intended covering map.

  Introduce the cubic algebra
  \[
   \mathcal C_{A,B}
     =\Q[\theta]/(\theta^3+A\theta+B).
  \]
  The element $\theta$ simultaneously represents the three finite branch
  values.  Choose an element $\kappa\in\mathcal C_{A,B}$ and a monic
  degree $10$ polynomial $U\in\mathcal C_{A,B}[z]$. We now define
  $N_0,N_1,N_2$ by the identity 
  \begin{equation}\label{eq:l19-square-fibers}
   \kappa\,U(\theta,z)^2
     =N_0(z)+\theta N_1(z)+\theta^2N_2(z)
   \qquad\text{in }\mathcal C_{A,B}[z].
  \end{equation}
  Indeed, under the embedding that sends $\theta$ to $y_i$, this identity
  becomes
  \[
   F(y_i,z)=\kappa_i U_i(z)^2.
  \]
  Consequently, when $U_i$ is squarefree, the fiber above $y_i$ consists of
  ten points of ramification index $2$.  This gives cycle type $2^{10}$ at
  each of the three finite branch values.

  The fiber above $Y=\infty$ is controlled by the leading coefficient
  $N_2(z)$.  We impose
  \begin{equation}\label{eq:l19-order3}
   N_2=c_1Q_6^3Q_2,
  \end{equation}
  where $Q_6$ and $Q_2$ are monic of degrees $6$ and $2$.  Thus this fiber
  contains six points of ramification index $3$ and two unramified points,
  which is precisely the cycle type $1^2 3^6$.

  Finally, since \eqref{eq:l19-target-equation} is quadratic in $Y$, its
  discriminant with respect to $Y$ is
  \[
   D(z)=N_1(z)^2-4N_0(z)N_2(z).
  \]
  We impose (in order to enforce the normalization having genus $2$)
  \begin{equation}\label{eq:l19-genus2}
   D=c_2H_6R_{17}^2,
  \end{equation}
  where $H_6$ and $R_{17}$ are monic of degrees $6$ and $17$, respectively,
  and $H_6$ is required to be squarefree.  Completing the square in
  \eqref{eq:l19-target-equation} gives
  \[
   \bigl(2N_2Y+N_1\bigr)^2=D.
  \]
  After dividing by $R_{17}^2$, the function field of the normalization is generated by
  \[
   v^2=c_2H_6(z).
  \]
  Since $H_6$ is a squarefree sextic, this is a smooth curve of genus $2$.
  The factor $R_{17}^2$ has even multiplicity and introduces no additional
  branch points on the normalization.

  There are $66$ coordinates in this chart.  The element $\kappa$ and the
  monic polynomial $U$ contribute $3+30$ coordinates.  The polynomials
  $Q_6,Q_2$ and the scalar $c_1$ contribute $6+2+1$.  Finally,
  $c_2,H_6,R_{17}$ contribute $1+6+17$.  The polynomials $N_0,N_1,N_2$ are
  determined by \eqref{eq:l19-square-fibers} and are not additional
  coordinates.  Comparing coefficients in \eqref{eq:l19-order3} gives
  $21$ equations, while comparing coefficients in
  \eqref{eq:l19-genus2} gives $41$.  Hence the chart is defined by
  $62$ equations in $66$ coordinates.

  We work on the open subchart on which $Y^3+AY+B$ is separable, $N_1$ and $N_2$ are coprime,
  $\kappa$ is a unit, $c_1c_2\ne0$, each $U_i$ is squarefree, $H_6$ is
  squarefree, and $Q_6$ and $Q_2$ are squarefree and coprime.  On this subchart the three finite fibers have type $2^{10}$ and the fiber above
  infinity has type $1^2 3^6$. The normalization is the smooth
  genus-two curve $v^2=c_2H_6(z)$.  Exact calculation verifies that both
  the harmonic point and the target point lie in this open subchart.

We next place the harmonic cover in this chart. The harmonic cover has branch values
  \[
  s=0,\qquad s=\pm\sqrt{\frac{108}{19}},\qquad s=\infty.
  \]
  Under the change of coordinate $Y=2222316/s$, these become
  \[
  Y=\infty,\qquad
  Y=\pm2222316\sqrt{\frac{19}{108}},\qquad Y=0.
  \]
  Put $A_0=-868843330308$ and $B_0=0$. Since we have $-A_0=2222316^2\frac{19}{108}$, the three finite values are precisely the roots of $Y^3+A_0Y+B_0$.

   After making the substitutions $Y=2222316/s$ and $z=W$, the harmonic plane model supplies explicit values of all $66$ chart coordinates. Exact substitution verifies the equations
  \eqref{eq:l19-square-fibers}, \eqref{eq:l19-order3}, and
  \eqref{eq:l19-genus2}. Thus the harmonic cover gives an exact rational
  point of the deformation chart.

  Modulo $29$, the Jacobian of the $62$ defining equations at this point has
  rank $62$.  The chart is therefore smooth of dimension $4$ there.  We choose four coordinate functions whose differentials complement the
  Jacobian of the $62$ defining equations and fix their values. Adding these four conditions produces a square system of $66$ equations whose Jacobian is invertible
  modulo $29$.

  Since $t_*=-\frac{45}{13}\equiv1\pmod{29}$, the target branch pair $(A_*,B_*)$ reduces modulo $29$ to the harmonic
  branch pair $(A_0,B_0)$.  Multivariate Hensel lifting therefore gives a
  unique solution over $\Q_{29}$ with branch pair $(A_*,B_*)$, within the
  formal slice defined by the four chosen coordinates and in the modulo-$29$
  residue class of the harmonic point.

  It remains to prove that this $29$-adic point we just constructed descends to $\Q$.  We first
  reconstruct its absolute genus two invariants as rational numbers and verify the reconstructed relations exactly. Mestre's construction \cite{Mestre1991} associates to
  these invariants a conic over $\Q$.  In the present case this conic has an
  explicit rational point, which removes the obstruction to a genus-two
  model over $\Q$. Mestre's construction therefore produces a
  sextic model of the source curve over $\Q$.

 The rational sextic and the sextic obtained from the point produced by Hensel
  lifting represent the same genus-two curve, but use different coordinates on
  the hyperelliptic line. Modulo $29$, we enumerate all projective linear changes of
  coordinates that could identify their reductions. The change that
  identifies the two reductions is then lifted by a nonsingular four-variable
  Hensel Lemma calculation. The three variables describe the projective linear transformation,
  and the fourth records the proportionality factor between the two sextics. These computations can be found in the accompanying files.

  We then verify that the lifted transformation carries the marked divisors
  of degrees $2$ and $6$ above infinity to the corresponding divisors on the rational model. Rational reconstruction gives the $63$ coefficients of $N_0,N_1,N_2$.
  Substitution over $\Q$ confirms the branch pair $(A_*,B_*)$, the four fixed
  coordinates, and the chosen reduction modulo $29$. Hensel uniqueness under
  these conditions identifies this rational point with the lift obtained from
  the harmonic cover.

Let $(N_{0,*},N_{1,*},N_{2,*})$ denote the resulting rational coefficient rows. These rows and the corresponding pair $(A_*,B_*)$ are included in the
  accompanying files. Put
\[
 F_*(Y,z)=N_{0,*}(z)+YN_{1,*}(z)+Y^2N_{2,*}(z).
\]
This fixes a single cover. The stored rational point has branch pair
$(A_*,B_*)$, satisfies all $62$ identities and the same four fixed coordinates exactly,
and reduces modulo $29$ to the selected point, including the marked divisors $Q_2$ and $Q_6$.  It is therefore the Hensel lift in the chosen residue class and formal slice. 

For $F_*$, the branch cubic is irreducible and squarefree, the
three involution fibers are exact squares of degree $10$, $N_2$ factors as
$Q_6^3Q_2$, and $H_6$ is squarefree.  The visible ramification is
\[
 3\cdot10+6(3-1)=42=2\cdot20+2\cdot2-2 ,
\]
and the source cuve has genus $2$ so Riemann--Hurwitz shows that these four fibers are the whole branch locus.

\begin{proposition}\label{prop:l19-target-cover}
The cover $F_*(Y,z)=0$ has smooth genus-two source and
geometric monodromy $\PSL_2(\F_{19})$.
\end{proposition}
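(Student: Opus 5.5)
The smoothness and genus statement is already essentially verified just before the proposition. On the open subchart, the function field of $F_*(Y,z)=0$ is generated over $\Q(z)$ by $v$ with $v^2=c_2H_6(z)$, and $H_6$ is squarefree of degree $6$. So the normalization is the smooth genus-two curve $v^2=c_2H_6(z)$. I would first check that $F_*$ is irreducible over $\overline{\Q}(Y)$. This follows because the curve is connected: it is a double cover of the $z$-line branched at six points, hence geometrically irreducible. The Riemann--Hurwitz count displayed above then shows that the branch locus is exactly the three roots of $Y^3+A_*Y+B_*$ together with $Y=\infty$, with cycle types $2^{10},2^{10},2^{10},1^23^6$.

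For the monodromy, the plan is to transport it along the deformation, just as Lemma~\ref{lem:l19-harmonic-is-yang} transported monodromy by specialization. The geometric monodromy of the harmonic cover $\pi_{\mathrm h}$ is $\PSL_2(\F_{19})$ by Lemma~\ref{lem:l19-sign-pullback}, combined with Lemma~\ref{lem:l19-harmonic-is-yang} and the fact that the base change is the sign pullback.

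I would argue that the harmonic point and the target point lie on the same geometrically connected component of the smooth four-dimensional chart over the open subchart. On that subchart the covers form a family of tame covers of $\PP^1$ branched at four distinct points with constant ramification data. Such a family is locally constant in its monodromy, by the Riemann existence theorem and topological rigidity of covers under continuous motion of the branch points. So its geometric monodromy group, as a permutation group up to conjugacy in $S_{20}$, is constant along the component.

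To connect the two points I would use the $29$-adic Hensel construction. Both points lie in the same residue class modulo $29$, so they reduce to the same cover over $\F_{29}$. This is the step I expect to be the main obstacle. I would handle it by an argument mirroring Lemma~\ref{lem:l19-harmonic-is-yang}. Both covers have good tame reduction at $29$: the ramification indices $2,3$ are prime to $29$, the branch cubic stays separable modulo $29$, and $H_6$ stays squarefree. Both reduce to the same special fiber over $\F_{29}$ by construction of the Hensel lift. Since $29\nmid 20!$, Grothendieck's specialization theorem \cite{SGA1}*{Expos\'e~XIII, Cor.~2.9} identifies the geometric monodromy of each characteristic-zero cover with that of the common special fiber. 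Hence $F_*$ has the monodromy of the harmonic cover, namely $\PSL_2(\F_{19})$ in its natural action.

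As an independent sanity check, which I would not use in the proof itself, I would confirm the answer group-theoretically. A transitive subgroup of $S_{20}$ generated by three involutions of type $2^{10}$ and an element of type $1^23^6$ with product one, and with a genus-two source, is very restricted. I would also check that mod-$p$ factorization types of specializations of $F_*$ contain a $19$-cycle type $1\cdot19$.
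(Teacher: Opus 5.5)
Your proposal is correct and is essentially the paper's argument. Smoothness, genus and completeness of the branch locus come from the model $v^2=c_2H_6(z)$ and the Riemann--Hurwitz count. The monodromy is transported from the harmonic cover $\pi_{\mathrm h}$: both points reduce to the same point of the chart modulo $29$, and SGA~1, Expos\'e~XIII, Cor.~2.9 applies with $29\nmid 20!$.

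Your opening appeal to a common geometrically connected component and topological rigidity is neither established nor needed. The $29$-adic specialization argument compares the two characteristic-zero covers directly through their common special fiber, exactly as in the paper.
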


\begin{proof}
The identities above along with the Riemann--Hurwitz formula prove smoothness, the genus, and completeness of the branch locus.  For the monodromy,
note that the harmonic point and the target reduce to the same nonsingular
point of the chart modulo $29$. Away from the four disjoint branch
sections, the degree $20$ family is finite \'etale.  Any monodromy group here
is a subgroup of $S_{20}$, so its order divides $20!$ and is prime to $29$.
Therefore specialization preserves the monodromy
\cite{SGA1}*{Expos\'e~XIII, Cor.~2.9 and \S2.10} and both
characteristic zero fibers have the monodromy of the common special fiber.
By Lemmas~\ref{lem:l19-sign-pullback}
and~\ref{lem:l19-harmonic-is-yang}, and the identification of $W$ made at the
end of \S\ref{subsec:l19-sign}, that group is $\PSL_2(\F_{19})$.

\end{proof}

\begin{remark}
    In the language of Appendix~\ref{app:f13-construction}, the covers in this
chart lie in the Nielsen class of $\PSL_2(\F_{19})$ with three branch cycles
of order $2$, of cycle type $2^{10}$, and one of order $3$, of cycle type
$1^2 3^6$.  The analog of this class at $\ell=13$ is $(2A,2A,2A,3A)$, and
that is precisely the class K\"onig rules out~\cite{Koenig2017}*{\S7}. In that case, 
the source curves have genus $0$, however the Hurwitz curve is an elliptic curve of
rank $0$ whose rational points give no covers with real fibers.  At
$\ell=13$, we therefore moved to $(2A,2A,2A,6A)$ and genus one sources. At $\ell=19$, we use the class $(2A,2A,2A,3A)$, in which the Hurwitz curve has genus $0$ and the source curves have genus $2$. 
\end{remark}

\subsection{The Galois group of \texorpdfstring{$f_{19}$}{f19}}
\label{subsec:l19-specialization}
We specialize $F_*$ at
\[
 Y_0=-7546608500347.
\]
We then apply a change of the primitive element for the specialization and follow it by polynomial reduction. The result is the monic polynomial $f_{19}$ of
Appendix~\ref{app:f19}. Using a minimal polynomial identity, we have verified that $f_{19}$ and $F_{*}(Y_0,z)$ define isomorphic degree $20$ fields.

We chose the point $Y_0$ as follows. The three roots of $Y^3+A_*Y+B_*$ divide the real $Y$-line into four intervals. An exact Sturm calculation~\cite{BasuPollackRoy2006}*{\S2.2.2} shows that only one of these intervals gives specializations with twenty real roots. We searched this interval subject to congruence conditions at $3$ and $17$ chosen to produce favorable local data for B\"oge's criterion. The Chinese remainder theorem reduces the search to an arithmetic progression which contains $Y_0$.
\begin{proposition}\label{prop:l19-projective}
The polynomial $f_{19}$ is irreducible, all twenty of its roots are real,
and
\[
 \Gal(L_{19}/\Q)\cong\PSL_2(\F_{19})
\]
in the natural action on $\PP^1(\F_{19})$.
\end{proposition}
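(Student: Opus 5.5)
The plan is to sandwich $G=\Gal(L_{19}/\Q)$, viewed as a permutation group on the twenty roots of $f_{19}$, between an upper bound coming from the arithmetic monodromy of $F_*$ and a lower bound coming from Frobenius cycle types. This is the strategy announced in \S\ref{subsec:intro-constructions}. The two elementary assertions come first. Reality of all twenty roots is the exact Sturm computation already used to choose $Y_0$. Irreducibility follows from factorizations modulo two small unramified primes whose degree patterns admit no common proper subsum, exactly as for $f_{13}$ in Proposition~\ref{prop:l13-projective}. A squarefree reduction of type $1\cdot19$ together with one of type $10+10$ would suffice.

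For the upper bound, let $A\le S_{20}$ be the arithmetic monodromy group of $F_*$ over $\Q(Y)$. Proposition~\ref{prop:l19-target-cover} identifies its geometric monodromy as $\PSL_2(\F_{19})\trianglelefteq A$. Hence $A$ lies in $N_{S_{20}}(\PSL_2(\F_{19}))=\PGL_2(\F_{19})$, so $A$ is either $\PSL_2(\F_{19})$ or $\PGL_2(\F_{19})$. The point $Y_0$ is not a branch value, since it is not a root of $Y^3+A_*Y+B_*$ and not $\infty$. Therefore the specialization $F_*(Y_0,z)$ has Galois group isomorphic to a subgroup of $A$, up to a decomposition group, with compatible permutation action. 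Because $f_{19}$ defines the same degree-$20$ field as $F_*(Y_0,z)$, the two have the same Galois closure and the same action. This gives $G\le\PGL_2(\F_{19})$ in its natural action.

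To get down to $\PSL_2(\F_{19})$, I would check that $\Disc(f_{19})$ is a square in $\Q$, which gives $G\le A_{20}$. Odd elements of $\PGL_2(\F_{19})$ act as odd permutations on $\PP^1(\F_{19})$, as the signs in \eqref{eq:l19-folded-passport} show, so $\PGL_2(\F_{19})\cap A_{20}=\PSL_2(\F_{19})$. Alternatively, one could show that $A=\PSL_2(\F_{19})$ already, by checking that $\Disc_z F_*$ is a square times a constant square in $\Q(Y)$.

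For the lower bound, I would use the Frobenius cycle types at small primes not dividing $\Disc(f_{19})$. One pattern of type $1\cdot19$ gives an element of order $19$. A pattern such as $10^2$ or $9^2\,1^2$ gives an element whose order is prime to $19$, or one lying in a nonsplit torus. The proper subgroups of $\PSL_2(\F_{19})$ containing a $19$-cycle lie in the Borel subgroup $B\rtimes$ of order $171$. That group fixes a point and contains no element of cycle type $10^2$. Hence the subgroup generated is all of $\PSL_2(\F_{19})$ by Dickson's classification, and $G=\PSL_2(\F_{19})$ in the natural action.

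The step I expect to be the main obstacle is making the upper bound rigorous. Specialization controls $G$ only up to conjugacy inside $A$, and it requires that $A$ and its permutation action were computed over $\Q(Y)$ for the exact $F_*$, not merely geometrically. I would argue that every inner-Hurwitz point parametrizes a cover whose Galois closure has constants no larger than $\PGL_2/\PSL_2$, and then pin down the action via the field isomorphism between $f_{19}$ and $F_*(Y_0,z)$. The independent Magma \texttt{GaloisProof} computation serves as a cross-check.
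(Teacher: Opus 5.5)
Your proposal is correct and follows the paper's proof essentially step for step. The paper likewise uses mod-$p$ factorization patterns for irreducibility and for an element of order $19$, Sturm for reality, the bound $G\le A\le N_{S_{20}}(\PSL_2(\F_{19}))=\PGL_2(\F_{19})$ from specialization at the non-branch value $Y_0$, and the square discriminant to cut down to $\PSL_2(\F_{19})$. For the lower bound, your extra Frobenius pattern is unnecessary, since transitivity (which you already have from irreducibility) rules out the point-fixing Borel of order $171$, and that is exactly the paper's argument; note also that a $9^2\,1^2$ pattern would not have helped, because such elements lie in the Borel.
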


\begin{proof}
Exact factorization over $\F_7$ and over $\F_{23}$ gives
\begin{align*}
 f_{19}\bmod7={}&
 (x^5+x^3+2x^2+4x+2)(x^5+5x^4+5x^3+5x+4)\\
 &\cdot(x^5+6x^4+2x^3+5x^2+4x+6)(x^5+6x^4+4x^3+3x^2+5x+6),\\[1mm]
 f_{19}\bmod23={}&(x+17)
 (x^{19}+2x^{18}+12x^{17}+4x^{16}+16x^{15}+6x^{13}+x^{11}\\
 &\hspace{16mm}{}+20x^9+x^8+14x^7+7x^6+16x^5+9x^4
 +21x^3+7x^2+5x+22),
\end{align*}
both squarefree with every displayed factor irreducible.  The degrees of the
factors of a rational factor of $f_{19}$ would have to add up to the same
number in both patterns. The subset sums of $\{5,5,5,5\}$ are
$0,5,10,15,20$ and those of $\{1,19\}$ are $0,1,19,20$, whose only common
values are $0$ and $20$.  Hence $f_{19}$ is irreducible.  By Dedekind's theorem its Galois group $G$ contains an element of cycle type
$1\cdot19$, hence an element of order $19$.  

Irreducibility shows that $G$ is transitive.  Let
$H=\PSL_2(\F_{19})$ in its natural degree-20 action, and let $A$ be the
arithmetic monodromy group of $F_*$.  Proposition~\ref{prop:l19-target-cover}
gives the geometric monodromy group $H$.  Hence
\[
 H\trianglelefteq A\leq N_{S_{20}}(H).
\]
 The exact normalizer calculation gives
\[
 N_{S_{20}}(H)=\PGL_2(\F_{19}),\qquad
 N_{S_{20}}(H)\cap A_{20}=H.
\]
Since $Y_0$ is not a branch point of the family, specialization at $Y_0$
embeds $G$ into the arithmetic monodromy group \cite{Serre2008}*{Ch.~1}. We also check that the
discriminant of $f_{19}$ is a nonzero square, so $G\leq A_{20}$ and therefore $G\leq H$.

For the lower bound, an exhaustive search inside $H$ shows that no proper
subgroup of $H$ containing an element of order $19$ is transitive.  We conclude $G=H$. 

Additionally, the exact Magma script in the level $19$ code package proves irreducibility and, using \texttt{GaloisProof},
identifies the Galois group in its degree-$20$ action with
$\PSL_2(\F_{19})$. Sturm arithmetic in the accompanying
Sage file gives twenty real roots~\cite{BasuPollackRoy2006}*{\S2.2.2}.
\end{proof}

\subsection{Ramification}\label{subsec:l19-local}
Write $K_{19}=\Q[x]/(f_{19})$, put $p_3=5605357159$, and let
\begin{equation*}
\begin{split}
 p_4={}&1364622470507035764968656932636748268884643728575565929266173990\\
 &856504046452283837355063452317992591 ,
\end{split}
\end{equation*}
a prime of $100$ digits.  A certified maximal-order computation gives
\begin{equation}\label{eq:l19-field-discriminant}
 \Disc(K_{19})=17^{10}\,p_3^{16}\,p_4^{10} .
\end{equation}

\begin{lemma}\label{lem:l19-local}
The primes ramified in $L_{19}$ are exactly $17$, $p_3$ and $p_4$, and for
each the pair $(e(q),f(q))$ is as in Table~\textup{\ref{tab:l19-local}}.
\end{lemma}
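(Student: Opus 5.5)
The proof will mirror that of Lemma~\ref{lem:l13-local}. First I will compute the maximal order of $K_{19}=\Q[x]/(f_{19})$ in PARI/GP and certify it with \texttt{nfcertify}; this confirms \eqref{eq:l19-field-discriminant}. The primes ramified in $K_{19}$ are then exactly $17$, $p_3$ and $p_4$. Since each of them ramifies in $K_{19}\subset L_{19}$, and Lemma~\ref{lem:no-hidden-ramification} excludes any further ramification in $L_{19}$, the set of ramified primes is as claimed. Next I will factor $17$, $p_3$ and $p_4$ in the certified maximal order to record the types $(e_i,f_i)$ for the table. The discriminant exponents already constrain these types. All three primes are odd and exceed $20$, so they are tamely ramified in $L_{19}$, and the exponent of $q$ in $\Disc(K_{19})$ equals $\sum_i f_i(e_i-1)$. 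Exponent $10$ suggests ten orbits of size $2$ for an involution (the type $2^{10}$). Exponent $16$ at $p_3$ fits, for example, $I=C_3$ with cycle type $1^2 3^6$, which gives $6\cdot 2=12$, so this guess is off. I would instead expect something like $1^{2}\,9^{2}$, which gives $16$, or $e=6$ orbits, and I will let the factorization decide.

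For each of the three primes I will then run the orbit dictionary of Lemma~\ref{lem:orbit-dictionary} in reverse. Tameness makes $I=\langle\sigma\rangle$ cyclic and $D/I$ cyclic, generated by a Frobenius lift $\phi$ with $\phi\sigma\phi^{-1}=\sigma^q$. In GAP I will enumerate all such pairs $I\trianglelefteq D\leq\PSL_2(\F_{19})$ up to conjugacy in the natural degree-$20$ action, keeping only those whose $D$-orbits with $I$-orbit sizes reproduce the observed factorization type. By the dictionary, each surviving pair gives $e(q)=|I|$ and $f(q)=|D/I|$. I expect a unique value of $(e(q),f(q))$ in every case. This is the same finite search already used for $\ell=13$, and it is easier here because there is no wild prime. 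For B\"oge's condition, note that $17\equiv1\pmod4$ while $p_3\equiv3\pmod4$, and $p_4$ should be checked; the table must accordingly show $f(17)$ odd and the other residue degrees correctly matched.

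The main obstacle I expect is computational, namely certifying the maximal order and factoring $p_4$ in it. The $100$-digit prime $p_4$, together with coefficients of up to $361$ digits, makes the $p_4$-maximality step the costly part. I would first compute the maximal order only locally at the known primes, using \texttt{nfinit} with the factorization of the discriminant supplied, so that PARI never has to factor a huge integer. I would then certify this order. If the full certification is too slow, I would rely on the local $p$-maximality checks at $17$, $p_3$ and $p_4$. Once the factorization types are known, the group-theoretic search is routine.
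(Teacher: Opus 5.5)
Your plan is essentially the paper's proof: certify the maximal order of $K_{19}$ with \texttt{nfcertify}, factor $17$, $p_3$, $p_4$, run the tame enumeration of pairs $I\trianglelefteq D\leq\PSL_2(\F_{19})$ through Lemma~\ref{lem:orbit-dictionary}, and invoke Lemma~\ref{lem:no-hidden-ramification} to show the ramified set is exact. Two side remarks need fixing. First, $17<20$, so tameness at $17$ should be justified by $17\nmid|\PSL_2(\F_{19})|=3420$, not by $q>20$. Second, your fallback of checking maximality only at $17,p_3,p_4$ would not by itself exclude ramification at other primes dividing $\Disc(f_{19})$ unless you also know the resulting order has discriminant exactly $17^{10}p_3^{16}p_4^{10}$, which is what the global certification supplies, so that step is not optional.
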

  \begin{table}[ht]
  \centering
  \begin{tabular}{@{}cclc@{}}
  \toprule
  $q$ & type of $q$ in $K_{19}$ & $(I,D)$ & $(e(q),f(q))$\\
  \midrule
  $17$ & $(2,5)^2$ & $(C_2,C_{10})$ & $(2,5)$\\
  $p_3$ & $(1,1)^2,(9,1)^2$ & $(C_9,C_9)$ & $(9,1)$\\
  $p_4$ & $(2,2)^5$ & $(C_2,C_2\times C_2)$ & $(2,2)$\\
  \bottomrule
  \end{tabular}
  \caption{Local data for $K_{19}$ and for its Galois closure $L_{19}$.}
  \label{tab:l19-local}
  \end{table}

\begin{proof}

As in Lemma \ref{lem:l13-local}, we construct the maximal order of $K_{19}$ using PARI, and certify it with \texttt{nfcertify}, and also confirm the discriminant \eqref{eq:l19-field-discriminant}. Exact factorization of the primes $17\mathcal O_{K_{19}}$, $p_3\mathcal O_{K_{19}}$ and $p_4\mathcal O_{K_{19}}$ gives the decomposition types recorded in the second column of Table~\ref{tab:l19-local}.
All three ramified primes are tame,
so the enumeration of pairs $I\trianglelefteq D\leq\PSL_2(\F_{19})$
described after Lemma~\ref{lem:orbit-dictionary} applies with $I$ cyclic,
$D/I$ cyclic. In each case, we find a single conjugacy class in the stated permutation action. Every prime ramified in $K_{19}$ ramifies in $L_{19}$, and by
Lemma~\ref{lem:no-hidden-ramification} there is no further ramification, proving the assertion.
\end{proof}

The prime $p_3$ has $e(p_3)=9$ odd and so imposes no condition in
Theorem~\ref{thm:boege}.  The other two give
\[
 e(17)=2,\quad f(17)=5\ \text{odd},\quad 17\equiv1\pmod4,
\]
and
\[
 e(p_4)=2,\quad f(p_4)=2\ \text{even},\quad p_4\equiv3\pmod4 .
\]
Both satisfy the parity condition, so Lemma~\ref{lem:l19-local} and
Proposition~\ref{prop:l19-projective} together prove
Theorem~\ref{thm:stems} for $\ell=19$.  This completes the proof of
Theorem~\ref{thm:main}.
\qed

\section{A tower of totally real congruence
  extensions}\label{sec:consequence}

The representations produced by Theorem~\ref{thm:main} are even representations which are known to be more difficult to study. In two papers, \cites{Ramakrishna1999,Ramakrishna2002}, Ramakrishna investigated obstructions to lifting mod $p$ Galois representations to characteristic $0$ $p$-adic representations. His deformation theorem, combined with Theorem \ref{thm:main}, gives the following
consequence.

\begin{corollary}\label{cor:adic-towers}
For each $\ell\in\{13,19\}$ there is a continuous surjection
\[
 \rho_\ell\colon\GQ\twoheadrightarrow\SL_2(\Z_\ell)
\]
unramified outside a finite set of primes, whose reduction modulo $\ell$ cuts
out $M_\ell$, for which every complex conjugation lies in $\ker\rho_\ell$.
Consequently, the fixed field $M_{\ell,n}$ of
$\ker(\rho_\ell\bmod\ell^n)$ is totally real with
$\Gal(M_{\ell,n}/\Q)\cong\SL_2(\Z/\ell^n\Z)$, the fields satisfy
$M_{\ell,n}\subset M_{\ell,n+1}$, and all are unramified outside one fixed
finite set.
\end{corollary}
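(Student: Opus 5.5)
We apply Ramakrishna's lifting theorem \cite{Ramakrishna2002} to the residual representation
\[
 \bar\rho_\ell\colon\GQ\twoheadrightarrow\Gal(M_\ell/\Q)\cong\SL_2(\F_\ell)\subset\mathrm{GL}_2(\F_\ell),
\]
where $M_\ell$ is the totally real field of Theorem~\ref{thm:main}. Its hypotheses are: $\ell\geq7$; the image contains $\SL_2(\F_\ell)$, which holds since $\bar\rho_\ell$ is surjective onto it; and $\bar\rho_\ell$ is \emph{even}, meaning $\bar\rho_\ell(c)=I$ for complex conjugation $c$. Evenness holds because $M_\ell$ is totally real.

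In this setting Ramakrishna's theorem (the even case, with fixed determinant) produces a continuous lift $\rho_\ell\colon\GQ\to\SL_2(\Z_\ell)$ of $\bar\rho_\ell$. This lift is unramified outside a finite set $S$, where $S$ consists of the ramified primes of $M_\ell$, $\ell$, and finitely many auxiliary primes. We fix the determinant to be trivial so that the lift lands in $\SL_2$. The main point to check is that the lift can be taken with $\rho_\ell(c)=I$. Ramakrishna's deformation condition at $\infty$ in the even case is the trivial one. Moreover, any lift of $c$ has order dividing $2$ and reduces to $I$. The kernel of $\SL_2(\Z_\ell)\to\SL_2(\F_\ell)$ is pro-$\ell$ with $\ell$ odd, so it contains no element of order $2$. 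Hence $\rho_\ell(c)=I$ automatically. Since all complex conjugations are conjugate in $\GQ$, each of them lies in $\ker\rho_\ell$.

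Surjectivity onto $\SL_2(\Z_\ell)$ follows from a standard lemma, which we would cite (Serre, \emph{Abelian $\ell$-adic representations}, IV-23, Lemma~3): for $\ell\geq5$, a closed subgroup of $\SL_2(\Z_\ell)$ surjecting onto $\SL_2(\F_\ell)$ is all of $\SL_2(\Z_\ell)$. This applies because the image of $\rho_\ell$ is closed and surjects onto $\SL_2(\F_\ell)$, and $\ell\in\{13,19\}$.

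The consequences are then formal. Let $M_{\ell,n}$ be the fixed field of $\ker(\rho_\ell\bmod\ell^n)$. Then $\Gal(M_{\ell,n}/\Q)\cong\SL_2(\Z/\ell^n\Z)$, since reduction $\SL_2(\Z_\ell)\to\SL_2(\Z/\ell^n\Z)$ is onto. The kernels decrease as $n$ grows, which gives $M_{\ell,n}\subset M_{\ell,n+1}$. Each $M_{\ell,n}$ is totally real because every complex conjugation lies in $\ker\rho_\ell\subseteq\ker(\rho_\ell\bmod\ell^n)$. All these fields are unramified outside the single set $S$. Finally, $M_{\ell,1}=M_\ell$ because $\rho_\ell$ reduces to $\bar\rho_\ell$.

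The main obstacle is matching the precise hypotheses of \cite{Ramakrishna2002}. In the even case, Ramakrishna's method needs the cohomological conditions of \cite{Ramakrishna1999}. These are the vanishing of the relevant Selmer and dual Selmer groups after adding auxiliary primes, which uses that the image of $\bar\rho_\ell$ is big enough that $H^1(\Gal(M_\ell(\mu_\ell)/\Q),\mathrm{ad}^0)=0$ and that $\mathrm{ad}^0\bar\rho_\ell$ is irreducible. Both hold for image $\SL_2(\F_\ell)$ with $\ell\geq7$. We would verify these hypotheses, together with the local conditions at the ramified primes of $M_\ell$ and at $\ell$. The theorem allows arbitrary lifts at these primes; one does not need a geometric condition, since the lift is not required to be de Rham. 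This is the step that requires care.
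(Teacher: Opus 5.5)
Your proposal is correct and follows the paper's proof. You cite Ramakrishna's lifting theorem to obtain $\rho_\ell$ with values in $\SL_2(\Z_\ell)$, use Serre's lemma for surjectivity, and show $\rho_\ell(c)=I$; your pro-$\ell$-kernel argument is equivalent to the paper's observation that $(\rho_\ell(c)-I)(\rho_\ell(c)+I)=0$ with $\rho_\ell(c)+I$ invertible. The hypothesis-matching you flag as the main obstacle is settled in the paper by citing \cite{Ramakrishna2002}*{Corollary~1(a)} directly. That result applies to every surjection $\GQ\twoheadrightarrow\SL_2(k)$ with $\operatorname{char}k\geq7$ and imposes no parity condition, so no separate verification of Selmer-type conditions is needed.
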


\begin{proof}
Fix an isomorphism $\Gal(M_\ell/\Q)\simeq\SL_2(\F_\ell)$ and let
$\overline\rho_\ell\colon\GQ\twoheadrightarrow\SL_2(\F_\ell)$ be the
corresponding representation.  Ramakrishna proved that for a finite field $k$
of characteristic at least $7$, every surjection
$\GQ\twoheadrightarrow\SL_2(k)$ admits a deformation to
$\SL_2(W(k))$ that is unramified outside a finite set
\cite{Ramakrishna2002}*{Corollary~1(a)}. No parity hypothesis on the residual
representation is imposed.  Since $W(\F_\ell)=\Z_\ell$, this gives
$\rho_\ell$.

For $\ell\geq5$, a closed subgroup of $\SL_2(\Z_\ell)$ that surjects
modulo $\ell$ is the whole group
\cite{Serre1968}*{Ch.~IV, Lemma~3}, so the lift is surjective. Let $c$ be a complex conjugation. The total reality of $M_\ell$ gives
$\overline\rho_\ell(c)=I$, so $\rho_\ell(c)\equiv I$ modulo $\ell$ and
$\rho_\ell(c)+I$ is invertible. Since $c^2=1$, we also have
$\rho_\ell(c)^2=I$. Therefore
\[
 \bigl(\rho_\ell(c)-I\bigr)\bigl(\rho_\ell(c)+I\bigr)=0 ,
\]
which forces $\rho_\ell(c)=I$.  Reducing modulo $\ell^n$ gives the stated
finite quotients, all totally real and all unramified outside the
ramification set of $\rho_\ell$.  The kernels decrease with $n$, so their
fixed fields are nested.
\end{proof}

\begin{remark}
Corollary~\ref{cor:adic-towers} is a pure existence result.  It produces no defining
polynomials for the higher layers, and we do not assert that Ramakrishna's
even lifts are potentially semistable or geometric.
\end{remark}

\section{The computations}\label{sec:computations}

All the necessary code can be found in the following GitHub repository.
\begin{center}
    \href{https://github.com/eekarabiyik/SL2F13}{https://github.com/eekarabiyik/SL2F13}.
\end{center}

This work includes a large amount of computer assisted calculations. These consist of factorizations
of integers and of polynomials over $\Q$ and over finite fields, Sturm
counts~\cite{BasuPollackRoy2006}*{\S2.2.2}, maximal orders and their prime factorization data, resolvent
constructions, exhaustive searches in finite permutation groups, and
substitutions into polynomial identities. 
We used SageMath~10.9~\cite{SageMath} with PARI/GP~2.17.4~\cite{PARI2} for the
arithmetic, and GAP~4.16.0~\cite{GAP} with the transitive groups
library~\cite{TransGrp} for the group theory.  Every maximal order is
certified by PARI's \texttt{nfcertify}. None of the discriminants or
prime factorizations used in this paper is conditional on the generalized Riemann
hypothesis.  Two Magma~\cite{Magma} scripts verify Propositions \ref{prop:l13-projective} and \ref{prop:l19-projective}.

To find the equations of
Appendix~\ref{app:f13-construction} and \S\ref{subsec:l19-deformation}, our computations 
involved numerical continuation, $p$-adic lifting, Berlekamp--Massey and
Pad\'e reconstruction, and rational reconstruction. The resulting candidates
were retained only after exact substitution into their defining identities.

The archive in the GitHub repository contains one self-contained package for each prime.  Each has a short certificate that begins with the displayed coefficients and verifies everything asserted about $L_\ell$ in \S\ref{sec:l13} and
\S\ref{sec:l19}.  The longer level-$13$ check verifies the exact degree-$54$ Belyi identity, the stored genus-one family, the specialization at $t_0$, and the relation with the displayed polynomial.  It does not repeat the numerical continuation and Hensel computations by which the stored family was discovered.  The longer level-$19$ check reconstructs Yang's equation from Hecke operators, the harmonic model, the sign pullback, and the $29$-adic descent.

\appendix

\section{The construction of \texorpdfstring{$f_{13}$}{f13}}
\label{app:f13-construction}
This appendix records how $f_{13}$ was computed. 

\subsection{The Hurwitz component}
Let $G=\PSL_2(\F_{13})$ act on fourteen points.  A four-point $G$-cover of
$\PP^1$ is described topologically by branch cycles $g_1,g_2,g_3,g_4\in G$
with
\[
 g_1g_2g_3g_4=1,\qquad\langle g_1,g_2,g_3,g_4\rangle=G ,
\]
taken up to simultaneous conjugation. The resulting finite set is the inner
Nielsen class. The Hurwitz braid group acts on it by moving the branch
points. The orbits of this action are the connected components of the corresponding
Hurwitz space.  We refer to \cite{Koenig2017}*{\S2} for further details including the algorithms used.

We take the Nielsen class
\begin{equation}\label{eq:l13-nielsen}
 g_1,g_2,g_3\in2A,\qquad g_4\in6A ,
\end{equation}
whose two conjugacy classes act on $\PP^1(\F_{13})$ with cycle types $1^2 2^6$ and
$1^2 6^2$.  The permutation indices sum to
$3\cdot6+10=28=2\cdot14+2\cdot1-2$, so by Riemann--Hurwitz the degree $14$
source curves have genus~$1$. Note that K\"onig investigates a different obstructed genus-zero
family of type $(2A,2A,2A,3A)$~\cite{Koenig2017}*{\S7}. 

For one fixed ordering, fixing $g_1$ leaves $1296$ tuples in
\eqref{eq:l13-nielsen}. The centralizer of $g_1$, of order $12$, acts freely.
Indeed, an element fixing a generating tuple centralizes all its entries and
therefore lies in the trivial center of $G$. Thus there are $108$ inner
classes.  Let $q_1,q_2,q_3$ be the standard Hurwitz generators,
where $q_i$ interchanges the $i$th and $(i+1)$st branch points, and put
\[
 Q'=\bigl\langle q_1q_3^{-1},(q_1q_2q_3)^2\bigr\rangle .
\]
This is the standard Klein-four reduction subgroup, its image consists of the identity and the three double
transpositions of the four branch labels of our Nielsen class.  Its orbits identify the four
markings of a branch configuration having the same cross ratio. To apply
the full braid group, we use all four placements of the $6A$ class,
obtaining $432$ inner states. The group $Q'$ has free orbits of size $4$ so we are again left with $108$ reduced inner states.  The outer involution of $\PGL_2(\F_{13})/G$ identifies these in
pairs, leaving $54$ reduced absolute states. These states are the sheets of
the map from the absolute Hurwitz curve to the branch-configuration line.
Thus this map is a degree-$54$ Belyi map. Braid enumeration gives its
three-point monodromy triple with cycle types
\begin{equation}\label{eq:l13-hurwitz-passport}
 3^{18},\qquad 1^2 2^{26},\qquad 2\,3^4\,6\,7^3\,13 .
\end{equation}
The indices sum to $36+26+44=106=2\cdot54 - 2$, so the compactified absolute
Hurwitz curve has genus $0$. Its cusps of widths $2$, $6$ and $13$ are
rational and unique. We normalize a coordinate $x$ by placing them at
$0$, $8$ and $\infty$, respectively.

The reconstruction produced the following exact degree-$54$ Belyi map with
passport \eqref{eq:l13-hurwitz-passport}:
\begin{equation}\label{eq:l13-belyi}
 \beta=-\frac{\Phi^3}{2^6 3^9\,c},
 \qquad
 \beta-1=-\frac{\Psi^2\lambda}{2^6 3^9\,c},
\end{equation}
where $\Phi,\Psi\in\Z[x]$ are monic of degrees $18$ and $26$, given in GitHub, and
\[
\begin{aligned}
 \lambda&=x^2-10x+13, &
 \sigma&=x^3-11x^2+24x-13,\\
 \varrho&=x^4-19x^3+113x^2-221x+169, &
 c&=x^2(x-8)^6\varrho^3\sigma^7 .
\end{aligned}
\]
The two expressions in \eqref{eq:l13-belyi} are compatible from the equation
\begin{equation}\label{eq:l13-belyi-identity}
 \Phi^3-\Psi^2\lambda=-2^6 3^9 c
\end{equation}
holding in $\Z[x]$. Further computations show that the zero and pole divisors of $\beta$ and $\beta -1$ give the three cycle types of \eqref{eq:l13-hurwitz-passport}. Riemann--Hurwitz then
shows that $\beta$ has no other critical points.  The map $\beta$ was found as follows. We first
normalized the three distinguished cusps. The resulting equations were then solved
numerically to high precision and then the rational coefficients were recognized.  A separate path continuation matched the monodromy triple, up to
simultaneous conjugacy, with the enumerated braid triple whose cycle types are given in \eqref{eq:l13-hurwitz-passport}.

\subsection{Constructing the degree \texorpdfstring{$14$}{14}
family}
  Under the numerical identification just described, the two simple points of $\beta$ above $1$, namely the roots of
  $\lambda(x)$, form the branch divisor of the inner-to-absolute double cover.
  Its function field therefore has the form
  \[
   \Q(x)\bigl(\sqrt{d\,\lambda(x)}\bigr),
   \qquad d\in\Q^\times/\Q^{\times2}.
  \]
  The rational width-two cusp lies above $x=0$. Since $\lambda(0)=13$, the
  existence of a rational point above $x=0$ implies that $13d$ is a square in
  $\Q$. Thus \(d\) has square class \(13\). After rescaling the second
  coordinate, the inner curve has the equation
  \[
   y^2=13\lambda(x)=13(x^2-10x+13).
  \]
  The rational point $(0,13)$ gives the parametrization
\[
 x(\eta)=-\frac{26\eta}{\eta^2-10\eta+12},
 \qquad
 y(\eta)=\frac{13(12-\eta^2)}{\eta^2-10\eta+12}.    
\]
The deck involution in this parametrization becomes $\eta\mapsto12/\eta$.  We have used numerical constructions to obtain the above identities, but we also verify all the equations directly over $\Q$.  Above the inner point
$\eta$, the three order 2 branch points of the associated degree $14$
cover are the roots of
\[
 b_\eta(T)=T^3-39\,\Phi(x(\eta))\,T+26\,\Psi(x(\eta))\,y(\eta) .
\]
To see that this cubic represents the required point of the moduli space,
apply the four-point classification used in \S\ref{subsec:l19-deformation}.
The double cover branched at the roots of $T^3+pT+q$ and at $\infty$ has
normalized $j$-invariant $4p^3/(4p^3+27q^2)$; substituting
$p=-39\Phi$ and $q=26\Psi y$ and using $y^2=13\lambda$ and
\eqref{eq:l13-belyi-identity} returns $\beta(x)$. 

We work at $\eta=1$, where $x=-26/3$ and $y=143/3$, so that
\begin{equation*}
\begin{split}
 b_1(T)={}&T^3
 -\frac{304640314484285739614258817616}{129140163}\,T\\
 &+\frac{336287756831510666288167724601841306395891328}
        {7625597484987} .
\end{split}
\end{equation*}
This cubic is irreducible with positive discriminant, its roots are real
and distinct.    We seek a degree-$14$ cover whose three finite branch values are the
  roots $t_1,t_2,t_3$ of $b_1(T)$.  Above each $t_i$ we require six points of
  ramification index $2$ and two unramified points. The two unramified points
  in the prescribed fiber above $T=\infty$ form an effective divisor of
  degree $2$ on the genus-one source. Riemann--Roch gives a nonconstant
  function $z$ with this polar divisor. Hence the function field of the
  source is quadratic over $\Q(z)$, so $T$ satisfies a quadratic equation
  over $\Q(z)$. We use this choice of $z$ throughout the deformation. To
  impose the three finite factorizations simultaneously, put
  \[
   \mathcal C_1=\Q[\theta]/(b_1(\theta))
  \]
  and look for a cover of the form
  \begin{equation}\label{eq:l13-family}
 F(T,z)=N_0(z)+TN_1(z)+T^2N_2(z),
 \qquad
 (\deg N_0,\deg N_1,\deg N_2)=(14,13,12),
\end{equation}
  and require
  \[
   F(\theta,z)=\kappa(\theta)U(\theta,z)^2V(\theta,z)
   \qquad\text{in }\mathcal C_1[z],
  \]
  where $U$ and $V$ are monic of degrees $6$ and $2$.  Under an embedding
  $\theta\mapsto t_i$, this becomes
  \[
   F(t_i,z)=\kappa_iU_i(z)^2V_i(z).
  \]
  When $U_i$ and $V_i$ are squarefree and coprime, the six roots of $U_i$
  give the six ramification points of index $2$, while the two roots of
  $V_i$ give the two unramified points.  Thus the fiber has cycle type
  $1^2 2^6$.

  At $T=\infty$ we require two ramification points of index $6$ and two
  unramified points.  Writing $S=1/T$ and multiplying the equation by $S^2$
  shows that the finite part of the fiber at infinity is cut out by
  $N_2(z)$.  We therefore impose
\begin{equation}\label{eq:l13-infinity-factorization}
       N_2(z)=Q_\infty(z)^6,\qquad \deg Q_\infty=2.
\end{equation}
With $S=1/T$, local homogenization shows that the two distinct roots of the
squarefree polynomial $Q_\infty$ yield two branches of ramification index
$6$, while the degree drops of $N_1$ and $N_2$ yield two unramified branches
over $z=\infty$. Thus the fiber has cycle type $1^2 6^2$.

Since
\eqref{eq:l13-family} is quadratic in $T$, projection to the $z$-line makes
the normalization of $F=0$ hyperelliptic, and the genus one condition (branching in exactly four points) reads
\begin{equation}\label{eq:l13-source-discriminant}
 N_1(z)^2-4N_0(z)N_2(z)=d_0\,H_4(z)\,R_{11}(z)^2
\end{equation}
with $H_4$ monic squarefree of degree $4$ and $R_{11}$ monic of degree $11$. The desired domain is then the curve given by $Y^2=d_0H_4(z)$.

We work on the open subchart on which $\kappa$ is a unit, $d_0\ne0$, each
pair $U_i,V_i$ is squarefree and coprime, and $Q_\infty$ and $H_4$ are
squarefree.  Since $b_1$ is separable, these conditions give the stated
fiber types and a smooth genus-one normalization.

Expanding everything in the basis $1,\theta,\theta^2$ leaves
\[
 \underbrace{3}_{\kappa}+\underbrace{6\cdot3}_{U}
 +\underbrace{2\cdot3}_{V}+\underbrace{2}_{Q_\infty}
 +\underbrace{1}_{d_0}+\underbrace{4}_{H_4}
 +\underbrace{11}_{R_{11}}=45
\]
unknowns.  Coefficient comparison in
  \eqref{eq:l13-infinity-factorization} and
  \eqref{eq:l13-source-discriminant} gives $44$ scalar equations.  At the
  harmonic seed their Jacobian has rank $43$, because one of the $44$
  differential rows lies in the span of the others.  The resulting
  two-dimensional freedom among the $45$ unknowns is exactly the freedom to
  translate and rescale the auxiliary coordinate $z$.  For the local
  continuation, we retain $43$ independent equations and fix the constant and
  linear coefficients of $Q_\infty$ at their seed values.  The resulting
  $45$ equations have a nonsingular Jacobian at the seed, and every exact
  endpoint is checked against all $44$ original equations.

  After continuing to $\eta=7/2$, we make an affine change of $z$ and impose
  \[
   \operatorname{coef}_{Q_\infty}(z)=0,\qquad
   \operatorname{coef}_{H_4}(z^3)=-4,
  \]
  where $\operatorname{coef}_f(z^k)$ denotes the coefficient of $z^k$ in $f$.
  The first condition fixes the translation of $z$, and the second fixes its
  scaling.  This normalization is used in the subsequent reconstruction.
A starting point comes from the harmonic point $\eta=2\delta$ of the inner conic where
$\delta=\sqrt3$.  Let $w$ be the coordinate  of the codomain of the harmonic cover and $X$ the affine
coordinate in the plane model of the domain. The equation
\[
 457w^2X^{12}A_2(X)-(493-88\delta)B_2(X)C_3(X)^4=0
\]
with
\[
\begin{aligned}
 A_2(X)&=481X^2+(123\delta+180)X+(594-27\delta),\\
 B_2(X)&=457X^2-(213\delta+648)X+(594+81\delta),\\
 C_3(X)&=1403X^3-(2703\delta+153)X^2-(1458\delta+1512)X-(4293\delta+243)
\end{aligned}
\]
defines a cover with the required fibers. Its connectedness is checked by
exact factorization, its discriminant is checked to be a square, and its
normalized source has the genus-one model
$Y^2=\frac{493-88\delta}{457}A_2(X)B_2(X)$.  On this normalization the two
coordinates are related, after choosing the sign of $Y$, by
\[
 w=\frac{Y C_3(X)^2}{X^6A_2(X)}.
\]
The projection from the local solution curve to the $\eta$-line has two local branches. Numerical continuation from the harmonic solution chose one of the two branches and
  proposed a rational point above $\eta=7/2$ (chosen because of proximity to $2\sqrt{3}$). We verified by exact substitution that this rational point satisfies the complete system.
  Starting from this point, we computed the solution in a finite field power series ring modulo $1000003$ to precision $256$.  Berlekamp--Massey and Pad\'e reconstruction then
  produced a candidate above $\eta=1$, where the branch cubic is irreducible.  We used Hensel's lemma to lift this candidate at fixed $\eta=1$ and reconstructed all $45$
  coordinates over $\Q$.  We then verified the complete system is satisfied. Hence, the finite field continuation was only used to locate the candidate at $\eta=1$.

At the exact solution, $d_0$ is nonzero, $H_4$ is squarefree, the three
pairs $U_i,V_i$ are squarefree and coprime, every $\kappa_i$ is nonzero,
and $Q_\infty$ is squarefree.  Thus
the three finite fibers have type $1^2 2^6$, and the fiber above infinity
has type $1^2 6^2$. The visible ramification is
$3\cdot6+10=28$, the full Riemann--Hurwitz total for a degree-$14$ map from
a genus-one curve, so there are no other branch points. The polynomial $F(0,z)$ is
irreducible, and since the leading coefficient of $F$ in $z$ is a nonzero
constant, $F$ is irreducible over $\Q(T)$.

\subsection{Specializing}
Let $\tau_1<\tau_2<\tau_3$ be the roots of $b_1$.  Specializing $T$ inside
the interval $(\tau_2,\tau_3)$ was numerically observed to keep all fourteen
roots real. Numerically, we have 
\[
 \tau_2\approx2.8040063127\cdot10^{10},
 \qquad
 \tau_3\approx2.8043085841\cdot10^{10} .
\]
For an integer $t$ in that interval let $g_t(z)\in\Z[z]$ be the primitive
part of $N_0(z)+tN_1(z)+t^2N_2(z)$, that is, the integral polynomial obtained
by clearing denominators and dividing out the content.  We
chose
\[
 t_0=28040147150 ,
\]
for which
\begin{equation}\label{eq:l13-branch-factorization}
 b_1(t_0)=-\frac{2^3\cdot5119\cdot p_1\cdot p_2}{7625597484987} .
\end{equation}
The three odd factors are prime and congruent to $3$ modulo $4$, which is
what made this specialization a promising candidate. Their factorization
type in the stem predicted the favorable local row $(e,f)=(2,2)$.   

Our code checked that $g_{t_0}$ is primitive and irreducible with
fourteen real roots and square discriminant.  It is not monic, and its
coefficients are large. Applying PARI's exact \texttt{polredabs} to
$\Q[z]/(g_{t_0})$ produces the monic polynomial $f_{13}$ of
Example~\ref{ex:f13}. 

\section{The degree \texorpdfstring{$20$}{20} projective stem field}
\label{app:f19}

The coefficients of $f_{19}$ are listed below. 

\input{polynomial19}

\bibliographystyle{amsra}
\bibliography{references}

\end{document}

%% file: polynomial13.tex
\begin{equation}\label{eq:f13-polynomial}
\begin{aligned}
f_{13}(x)={}&x^{14}-6x^{13}-204840441x^{12}-1119125930364x^{11}\\
 &+4529702096707905x^{10}+38159749624959546246x^{9}\\
 &-18546069574715570089137x^{8}-452968533089504837361085896x^{7}\\
 &-192666699116684784351337401333x^{6}+2209557167547162714402116518030950x^{5}\\
 &+1429616844522426330943895770324225533x^{4}\\
 &-3899920152435181455812963700044487447036x^{3}\\
 &-1769134263978452128562169254872450324635597x^{2}\\
 &+1792548173476070932710559372955582667141406554x\\
 &+825828856675546377159985433494767986559465932589
\end{aligned}
\end{equation}

%% file: polynomial19.tex
\begin{longtable}{@{}r>{\raggedright\arraybackslash\scriptsize}p{0.86\textwidth}@{}}
\toprule
$i$ & $a_i$ in $f_{19}(x)=\sum_{i=0}^{20}a_i x^i$\\
\midrule
\endfirsthead
\toprule
$i$ & $a_i$ (continued)\\
\midrule
\endhead
20 & $1$\\
19 & $-4$\\
18 & $-2873302382095736645289420504\allowbreak{}3351419234$\\
17 & $-1427370795596217703865009004\allowbreak{}4958006232528983921598922884$\\
16 & $3234440741404879449518318757\allowbreak{}6247912490430844121875890141\allowbreak{}3397892565191498577$\\
15 & $3050574465674072484837814059\allowbreak{}2050350959697703697714600576\allowbreak{}0229835906887688529279766318\allowbreak{}813484256$\\
14 & $-1744002606747923220693690837\allowbreak{}6859538853684191246912268867\allowbreak{}4167197048109754138320199820\allowbreak{}8160698551186713991165601663$\\
13 & $-2384962492143838995704216764\allowbreak{}8259429335783437647978229458\allowbreak{}9147636616153786643143294681\allowbreak{}8411916957826188752378533237\allowbreak{}205702306427313102$\\
12 & $4353796849873165112332174503\allowbreak{}4355777967813104575455427020\allowbreak{}8107912494622526863957689178\allowbreak{}7605723687260658553199927701\allowbreak{}9693406904823365766542431582\allowbreak{}39662525$\\
11 & $8301040870938150498053122726\allowbreak{}5250461515421868808923242394\allowbreak{}6167546344286008715790002425\allowbreak{}1886021511695622475860085614\allowbreak{}2442510345780928783542394896\allowbreak{}12807969922580775883414124$\\
10 & $-3526270265109903895556537490\allowbreak{}1075419355338587374813496266\allowbreak{}7772517528481585433066679959\allowbreak{}7018304313931427338706924528\allowbreak{}3554338985522344870936265890\allowbreak{}5393556357640534211179774612\allowbreak{}4098936390694883$\\
9 & $-1262814336026952044367203309\allowbreak{}4048820308466567791132728996\allowbreak{}4387262355423525583695519294\allowbreak{}6070463172834872377657343895\allowbreak{}5412027970635042692335049461\allowbreak{}3007518879362264429384071339\allowbreak{}7889764332995692191021750836\allowbreak{}3409506$\\
8 & $-1891861718674656333207943760\allowbreak{}1272757879967489437673871227\allowbreak{}3599500774057953890441180506\allowbreak{}6529420408091294167772388443\allowbreak{}8461079898125028727036698279\allowbreak{}9776853553666667116894366466\allowbreak{}7868311329630179935395645786\allowbreak{}234250208982368377058208$\\
7 & $7572122261101128567080492133\allowbreak{}6581018846867438629273858555\allowbreak{}9907492228686700587504977704\allowbreak{}8358492688015631731897335033\allowbreak{}3233121150966393713644550501\allowbreak{}7611514755525392311696204479\allowbreak{}7268363238094259672755489687\allowbreak{}9465939454931012735877949352\allowbreak{}08669609223808$\\
6 & $2534096526078379799100670729\allowbreak{}8893603224325321460383056639\allowbreak{}6304215409130032545919782243\allowbreak{}9614400928457966429498291049\allowbreak{}6708041031126186112588766119\allowbreak{}9906668833150913450927852470\allowbreak{}3475993764481360353112568014\allowbreak{}2149448078380880619521060437\allowbreak{}1499247082778443689638428035\allowbreak{}7719$\\
5 & $-2072824901948810594831757322\allowbreak{}6020236205308944679422307730\allowbreak{}0223669846940903139407580329\allowbreak{}5758260097832113331329564034\allowbreak{}4717408058014281647360584855\allowbreak{}7174992520983155090948069548\allowbreak{}1210047844969825721239953088\allowbreak{}6796018525239184068390092906\allowbreak{}3477249023679627404312261077\allowbreak{}9765153182502638752730$\\
4 & $-7534025836705874453136560413\allowbreak{}2239536629815665972744502533\allowbreak{}0292166111420205891292497491\allowbreak{}9055780696526217802105688313\allowbreak{}3800617553189794895238213572\allowbreak{}9873437769366970528533538433\allowbreak{}6677624063894496354230470925\allowbreak{}1142445351129533290696614486\allowbreak{}9180591936747986477780435563\allowbreak{}6499806671054939288853832817\allowbreak{}51156681596$\\
3 & $2773086799438814928734141334\allowbreak{}7498663452262152066389604634\allowbreak{}9478111843531014543126871914\allowbreak{}3035739044778898445439480103\allowbreak{}4742024420824833943883769462\allowbreak{}2537839955544720182837992411\allowbreak{}9298037716460298818843628127\allowbreak{}0458065708097199528078397461\allowbreak{}9249771127196777012974122486\allowbreak{}8785465142205229388165452989\allowbreak{}4037801347229592371152109800\allowbreak{}0$\\
2 & $7319961867596505741522531044\allowbreak{}6734247404759384078419646840\allowbreak{}0232735287564552166551775696\allowbreak{}4649497934114867183793902775\allowbreak{}8946889828763829532504942515\allowbreak{}6681694787425216532441363212\allowbreak{}7539975538444309321899811286\allowbreak{}5971851154241584889382914118\allowbreak{}8201023392179182957250899738\allowbreak{}0996977640851080520388034613\allowbreak{}9090908763224267397395143104\allowbreak{}132132020210800400$\\
1 & $-1510220604850384718215007913\allowbreak{}2434191909001722009385462377\allowbreak{}8721541550814125135001812846\allowbreak{}3330697954533691612334862540\allowbreak{}4321608821344013209058031678\allowbreak{}7086474103391121282087725878\allowbreak{}3616375093579470272232271118\allowbreak{}6992685041988275297132819001\allowbreak{}2775984382618962591254350411\allowbreak{}6482991796281053413109329555\allowbreak{}1994079490694698901661705042\allowbreak{}3688740095207071796191852083\allowbreak{}91844000$\\
0 & $-1494343891278352595771732503\allowbreak{}2158028580269247602660888000\allowbreak{}1075297479834391197495798987\allowbreak{}3072819954966664193493367937\allowbreak{}6610653297147630693112233900\allowbreak{}1377228843799294547688571033\allowbreak{}3041392281907380251784632302\allowbreak{}1935065621455454259544768227\allowbreak{}9060225697860047656569793158\allowbreak{}7394610064716731065038954142\allowbreak{}2564447887893333029360487049\allowbreak{}4893557245780920240673948959\allowbreak{}5714068341995599361640000$\\
\bottomrule
\end{longtable}

%% file: main.bbl
\begin{bibdiv}
\begin{biblist}

\bib{Magma}{article}{
      author={Bosma, Wieb},
      author={Cannon, John},
      author={Playoust, Catherine},
       title={The {M}agma algebra system. {I}. the user language},
        date={1997},
     journal={Journal of Symbolic Computation},
      volume={24},
      number={3--4},
       pages={235\ndash 265},
}

\bib{BaileyFried2002}{incollection}{
      author={Bailey, Paul},
      author={Fried, Michael~D.},
       title={{H}urwitz monodromy, spin separation and higher levels of a
  modular tower},
        date={2002},
   booktitle={Arithmetic fundamental groups and noncommutative algebra},
      editor={Fried, Michael~D.},
      editor={Ihara, Yasutaka},
      series={Proceedings of Symposia in Pure Mathematics},
      volume={70},
   publisher={American Mathematical Society},
     address={Providence, RI},
       pages={79\ndash 220},
         url={https://arxiv.org/abs/math/0104289},
      review={\MR{1935406}},
}

\bib{BrumerKramer2012}{article}{
      author={Brumer, Armand},
      author={Kramer, Kenneth},
       title={Arithmetic of division fields},
        date={2012},
     journal={Proceedings of the American Mathematical Society},
      volume={140},
      number={9},
       pages={2981\ndash 2995},
}

\bib{Boege1990}{article}{
      author={B{\"o}ge, Sigrid},
       title={{Witt-Invariante und ein gewisses Einbettungsproblem}},
        date={1990},
     journal={Journal f\"ur die reine und angewandte Mathematik},
      volume={410},
       pages={153\ndash 159},
}

\bib{BasuPollackRoy2006}{book}{
      author={Basu, Saugata},
      author={Pollack, Richard},
      author={Roy, Marie-Fran{\c c}oise},
       title={Algorithms in real algebraic geometry},
     edition={Second},
      series={Algorithms and Computation in Mathematics},
   publisher={Springer-Verlag},
     address={Berlin},
        date={2006},
      volume={10},
}

\bib{Butler1993}{article}{
      author={Butler, Gregory},
       title={The transitive groups of degree fourteen and fifteen},
        date={1993},
     journal={Journal of Symbolic Computation},
      volume={16},
      number={5},
       pages={413\ndash 422},
}

\bib{Atlas1985}{book}{
      author={Conway, John~H.},
      author={Curtis, Robert~T.},
      author={Norton, Simon~P.},
      author={Parker, Richard~A.},
      author={Wilson, Robert~A.},
       title={Atlas of finite groups: Maximal subgroups and ordinary characters
  for simple groups},
   publisher={Clarendon Press},
     address={Oxford},
        date={1985},
}

\bib{Crespo1997}{article}{
      author={Crespo, Teresa},
       title={{G}alois representations, embedding problems and modular forms},
        date={1997},
     journal={Collectanea Mathematica},
      volume={48},
      number={1--2},
       pages={63\ndash 83},
}

\bib{SGA1}{book}{
      author={Grothendieck, Alexander},
       title={Rev\^etements \'etales et groupe fondamental ({SGA} 1)},
      series={Lecture Notes in Mathematics},
   publisher={Springer-Verlag},
     address={Berlin--New York},
        date={1971},
      volume={224},
        note={Augment\'e de deux expos\'es de M. Raynaud},
}

\bib{TransGrp}{manual}{
      author={Hulpke, Alexander},
       title={{TransGrp}: Transitive groups library},
        date={2023},
         url={https://www.math.colostate.edu/~hulpke/transgrp},
        note={Version 3.6.5},
}

\bib{Klueners2000}{article}{
      author={Kl{\"u}ners, J{\"u}rgen},
       title={A polynomial with {G}alois group {$\mathrm{SL}_2(11)$}},
        date={2000},
     journal={Journal of Symbolic Computation},
      volume={30},
      number={6},
       pages={733\ndash 737},
}

\bib{Koenig2017}{article}{
      author={K{\"o}nig, Joachim},
       title={Computation of {H}urwitz spaces and new explicit polynomials for
  almost simple {G}alois groups},
        date={2017},
     journal={Mathematics of Computation},
      volume={86},
      number={305},
       pages={1473\ndash 1498},
         url={https://arxiv.org/abs/1512.05533},
}

\bib{Mestre1990}{article}{
      author={Mestre, Jean-Fran{\c c}ois},
       title={Extensions r\'eguli\`eres de {$\mathbb Q(T)$} de groupe de
  {G}alois {$\widetilde A_n$}},
        date={1990},
     journal={Journal of Algebra},
      volume={131},
      number={2},
       pages={483\ndash 495},
}

\bib{Mestre1991}{incollection}{
      author={Mestre, Jean-Fran{\c c}ois},
       title={Construction de courbes de genre 2 \`a partir de leurs modules},
        date={1991},
   booktitle={Effective methods in algebraic geometry},
      series={Progress in Mathematics},
      volume={94},
   publisher={Birkh\"auser},
     address={Boston, MA},
       pages={313\ndash 334},
}

\bib{Mestre1994b}{article}{
      author={Mestre, Jean-Fran{\c c}ois},
       title={Annulation, par changement de variable, d'\'el\'ements de
  {$\mathrm{Br}_2(k(x))$} ayant quatre p\^oles},
        date={1994},
     journal={Comptes rendus de l'Acad\'emie des sciences. S\'erie I,
  Math\'ematique},
      volume={319},
      number={5},
       pages={529\ndash 532},
}

\bib{Mestre1994}{article}{
      author={Mestre, Jean-Fran{\c c}ois},
       title={Construction d'extensions r\'eguli\`eres de {$\mathbb Q(T)$} \`a
  groupes de {G}alois {$\mathrm{SL}_2(\mathbb F_7)$} et {$\widetilde M_{12}$}},
        date={1994},
     journal={Comptes rendus de l'Acad\'emie des sciences. S\'erie I,
  Math\'ematique},
      volume={319},
      number={8},
       pages={781\ndash 782},
}

\bib{Plans2007}{article}{
      author={Plans, Bernat},
       title={Generic {G}alois extensions for {$\mathrm{SL}_2(\mathbb F_5)$}
  over {$\mathbb Q$}},
        date={2007},
     journal={Mathematical Research Letters},
      volume={14},
      number={3},
       pages={443\ndash 452},
}

\bib{Ramakrishna2002}{article}{
      author={Ramakrishna, Ravi},
       title={Deforming {G}alois representations and the conjectures of {S}erre
  and {F}ontaine--{M}azur},
        date={2002},
     journal={Annals of Mathematics (2)},
      volume={156},
      number={1},
       pages={115\ndash 154},
}

\bib{Ramakrishna1999}{article}{
      author={Ramakrishna, Ravi},
       title={Lifting {G}alois representations},
        date={1999},
     journal={Inventiones Mathematicae},
      volume={138},
      number={3},
       pages={537\ndash 562},
}

\bib{Serre2008}{book}{
      author={Serre, Jean-Pierre},
       title={Topics in {G}alois theory},
     edition={Second},
      series={Research Notes in Mathematics},
   publisher={A K Peters},
     address={Wellesley, MA},
        date={2008},
      volume={1},
}

\bib{Serre1968}{book}{
      author={Serre, Jean-Pierre},
       title={Abelian $l$-adic representations and elliptic curves},
   publisher={W. A. Benjamin},
     address={New York--Amsterdam},
        date={1968},
}

\bib{Serre1984}{article}{
      author={Serre, Jean-Pierre},
       title={L'invariant de {W}itt de la forme {$\operatorname{Tr}(x^2)$}},
        date={1984},
     journal={Commentarii Mathematici Helvetici},
      volume={59},
      number={4},
       pages={651\ndash 676},
         url={https://eudml.org/doc/139995},
}

\bib{Sonn1980}{article}{
      author={Sonn, Jack},
       title={{$\mathrm{SL}(2,5)$} and {F}robenius {G}alois groups over
  {$\mathbb Q$}},
        date={1980},
     journal={Canadian Journal of Mathematics},
      volume={32},
      number={2},
       pages={281\ndash 293},
}

\bib{PARI2}{manual}{
      author={{The PARI Group}},
       title={{PARI/GP}, version 2.17.4},
     address={Universit\'e de Bordeaux},
        date={2025},
         url={https://pari.math.u-bordeaux.fr/},
}

\bib{GAP}{manual}{
      author={{The GAP Group}},
       title={{GAP}---groups, algorithms, and programming, version 4.16.0},
        date={2026},
         url={https://www.gap-system.org},
        note={Official release 4.16.0; the certificate transcript records and
  checks the runtime version and the loaded TransGrp version},
}

\bib{SageMath}{manual}{
      author={{The Sage Developers}},
       title={{SageMath}, the {S}age mathematics software system, version
  10.9},
        date={2026},
         url={https://www.sagemath.org},
        note={Version 10.9},
}

\bib{Uttenthal2025}{misc}{
      author={Uttenthal, Peter~Vang},
       title={Level-raising of even representations of tetrahedral type and
  equidistribution of lines in the projective plane},
        date={2025},
        note={arXiv:2310.14352, version 2, 22 December 2025},
}

\bib{YangModularEquations}{misc}{
      author={Yang, Yifan},
       title={Computing modular equations for {Shimura} curves},
        date={2012},
        note={arXiv:1205.5217},
}

\bib{YangSchwarzian}{article}{
      author={Yang, Yifan},
       title={Schwarzian differential equations and {Hecke} eigenforms on
  {Shimura} curves},
        date={2013},
     journal={Compositio Mathematica},
      volume={149},
      number={1},
       pages={1\ndash 31},
         url={https://arxiv.org/abs/1110.6284},
}

\bib{Zywina2015}{article}{
      author={Zywina, David},
       title={The inverse {G}alois problem for {$\mathrm{PSL}_2(\mathbb
  F_p)$}},
        date={2015},
     journal={Duke Mathematical Journal},
      volume={164},
      number={12},
       pages={2253\ndash 2292},
         url={https://arxiv.org/abs/1303.3646},
}

\end{biblist}
\end{bibdiv}
